\documentclass[12pt,a4paper]{amsart}
\usepackage{amsfonts,amsthm,amssymb,mathrsfs,amscd,amsmath,url}
\usepackage[utf8]{inputenc}
\usepackage[T1]{fontenc}
\usepackage{dsfont}

\usepackage{pdfcomment}

\usepackage[main=english]{babel}

\usepackage{colortbl}
\usepackage[utf8]{inputenc}
\usepackage[a4paper]{geometry}
\usepackage{natbib}
\usepackage{graphicx,caption}
\usepackage{comment}

\usepackage{varioref}

\usepackage{cleveref,autonum}
\usepackage{epigraph}

\usepackage{mathabx}
\usepackage{hyperref}

\makeatletter
\newcommand\incircbin
{%
  \mathpalette\@incircbin
}
\newcommand\@incircbin[2]
{%
  \mathbin%
  {%
    \ooalign{\hidewidth$#1#2$\hidewidth\crcr$#1\ovoid$}%
  }%
}

\makeatother

\usepackage{enumitem}
\usepackage{relsize}

\newcommand{\N}{\mathbb{N}}
\newcommand{\Z}{\mathbb{Z}}

\newcommand{\R}{\mathbb{R}}
\newcommand{\Q}{\mathbb{Q}}
\newcommand{\C}{\mathbb{C}}
\newcommand{\A}{\mathcal{A}}

\newtheorem*{theorem*}{Theorem}

\newtheorem*{remark}{Remark}
\newtheorem*{notation}{Notation}

\newtheorem{lemma}{Lemma}
\newtheorem*{lemma*}{Lemma}

\newtheorem{prop*}{Proposition}

\newtheorem*{corollary*}{Corollary}

\newcommand{\congruence}[3]{\ensuremath{{#1}\equiv {#2}\bmod{#3}}}
\newcommand{\res}[1]{\operatorname*{Res}_{#1}}

\newcommand{\1}[1]{\mathds{1}\left[#1\right]}

\newcommand{\mycomment}[1]{}
\renewcommand{\epsilon}{\varepsilon}
\renewcommand{\phi}{\varphi}

\renewcommand{\L}{\mathcal{L}}

\usepackage{xcolor}
\usepackage{comment}
\includecomment{comment}

\title{The largest prime factor of an irreducible cubic polynomial}
\author{Ivan Ermoshin}
\date{}

\begin{document}

\begin{abstract}
    Heath-Brown proved that for a positive proportion of integers $n$, $n^3+2$ has a prime factor larger than $n^{1+c}$ with $c=10^{-303}$.
    
    We generalize this result to arbitrary monic irreducible cubic polynomial of $\Z[x]$ with $c$ replaced by an exponent $c_p$ dependent on the polynomial.
\end{abstract}
\maketitle

\tableofcontents

\section{Introduction}
\subsection{Historical overview}
Let $f(X)\in\Z[X]$ be an irreducible polynomial with positive leading coefficient and no fixed prime divisor. There is a famous conjecture of Bunyakovsky that such polynomial has infinitely many prime values at integer arguments. It is far beyond our reach but some weakened versions has been proven for a general polynomials, and a lot more can be accomplished if one specifies the polynomial.

One approach is to consider the following quantity
$$P(x,f)=P^+\left(\prod_{n\le x}f(n)\right),$$
where for an integer $n$ we let $P^+(n)$ and $P^-(n)$ denote the largest and smallest prime factors of $n$, respectively.

The first such result was found by Chebyshev. It states that 
$$\frac{P(x,X^2+1)}{x}\to\infty,\text{ as }x\to\infty.$$
The proof was sketched in Chebyshev's posthumous manuscripts and published and proved in full by Markov in \cite{markov}.

The result was improved and generalized by Nagel \cite{nagell}, Erd\H{o}s \cite{erdos} and later Tenenbaum\cite{tenenbaum}. The following bound due to Tenenbaum is the best known result for polynomials of arbitrary degree strictly greater than 1
$$P(x,f)\gg x\exp((\log x)^A),\text{ where }A=2-\log4-\epsilon.$$

However for the specific case $f(X)=X^2+1$ Hooley\cite{hooleyquad} was able to show 
$$P(x,X^2+1)\gg x^{11/10}.$$
A few years later Hooley\cite{hooleycub} got the first bound for $P(x,f)$ with a cubic $f$, though conditional on certain estimates for short Kloosterman sums, that is
$$P(x,X^3+2)\gg x^{1+\delta}\text{ for }\delta=1/30.$$

Later Heath-Brown\cite{HB} gave an unconditional proof of this fact with $\delta=10^{-303}$. Moreover he proved that the set of integers $n$ such that $P^+(n^3+2)\gg n^{1+\delta}$ has positive density.
The constant $\delta$ was improved by Irving\cite{irving} and is the best known for the time being
$$P(x,X^3+2)\gg x^{1+\delta}\text{ with }\delta=10^{-52}.$$

Quartic polynomials have been considered by Dartyge\cite{dartyge}, La Bret\`eche\cite{breteche} and Dartyge and Maynard\cite{DM}.

To begin with Dartyge obtained the following result for the twelfth cyclotomic polynomial $\Phi_{12}(X)=X^4-X^2+1$
$$P(x,\Phi_{12}(X))\gg x^{1+\delta}\text{ with }\delta=10^{-26531}.$$

Then La Bret\`eche generalised the result for even monic quartic polynomials having Klein four group as its Galois group. The condition on Galois group is necessary for some polynomial decomposition properties that are discussed later.

Recently Maynard and Dartyge extended it further, namely for monic quartic polynomials with cyclic and dihedral Galois group.

We also note that, with the current state of the art, an explicit constant does not seem to be obtainable in such general results, since it depends on the size of a fundamental domain for the action of the unit group by multiplication on an extension of $\Q$ generated by a root of the polynomial, as well as on the residue at $s=1$ of the Dedekind zeta function of the splitting field of $f$ over $\Q$.

Heath-Brown in \cite{HB} posed a question if it is possible to generalize his result to arbitrary cubic polynomials. Even though the Galois group does not intervene in case of cubic polynomials and the decomposition difficulties mentioned earlier does not occur, there was no general result, which should be certainly possible after all the progress on degree 4. We establish it in this paper. 

\begin{theorem*}
    For any monic irreducible cubic polynomial $f\in\Z[X]$ exists a constant $c>0$ such that for at least a positive proportion of integers $n\in[x,2x]$, the number $f(n)$ has a prime factor exceeding $x^{1+c}$. In particular
    $$P(x,f)\gg x^{1+c}.$$
\end{theorem*}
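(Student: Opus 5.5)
Following Heath-Brown's strategy for $n^3+2$, I would use a Chebyshev–Hooley counting argument. Let $\theta$ be a root of $f$, $K=\Q(\theta)$, with ring of integers $\mathcal{O}_K$. Set
$$S(x)=\sum_{x<n\le 2x}\log f(n).$$
Since $f$ is monic cubic, $S(x)=3x\log x+O(x)$. Expanding $\log f(n)=\sum_{p^e\| f(n)}e\log p$ gives
$$S(x)=\sum_{p}\log p\,\#\{n\in(x,2x]: p\mid f(n)\}+O(x).$$
Suppose, for contradiction, that for all but $\epsilon x$ of the $n\in(x,2x]$, every prime factor of $f(n)$ is at most $x^{1+c}$. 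Then I would split the primes into three ranges and aim to show their total contribution is at most $(3-\eta)x\log x$ for some $\eta>0$ depending only on $f$. The exceptional $n$ contribute at most $3\epsilon x\log x+O(x)$, and $\epsilon$ can be taken small.

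For small primes $p\le x$, the number of $n$ with $p\mid f(n)$ equals $\rho(p)x/p+O(\rho(p))$, where $\rho(p)$ is the number of roots of $f$ mod $p$. The prime ideal theorem in $K$ (via Dedekind–Kummer) gives $\sum_{p\le y}\rho(p)\log p/p=\log y+O(1)$, so this range contributes $x\log x+O(x)$. For the primes $x<p\le x^{1+c}$, each $n$ contributes at most $\sum\log p$ over prime divisors $p>x$ of $f(n)$. Because $f(n)\asymp x^3$, there are at most two such primes, and they contribute at most $\log f(n)-\log(\text{the $x$-smooth part})$. The whole problem is therefore to show that, for a positive proportion of $n$, $f(n)$ has a prime factor in $(x,x^{1+c}]$ significantly less often than trivial counting allows. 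Put differently, I must prove an \emph{upper bound} for
$$\sum_{x<p\le x^{1+c}}\log p\,\#\{n\sim x:\ p\mid f(n)\}$$
of the form $(2-\eta')x\log x$, beating the trivial bound $2x\log x$, which is what sieve dimension counting gives. This is exactly the range where Hooley needed short Kloosterman sums and Heath-Brown found a substitute.

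The key step replaces the condition $p\mid f(n)$, $p\approx x^{1+\delta}$ with $\delta\in[0,c]$, by a counting problem in $\mathcal{O}_K$. If $p\mid f(n)=N_{K/\Q}(n-\theta)$ and $p$ splits with a degree-one prime $\mathfrak p\mid (n-\theta)$, then $n-\theta=\mathfrak p\mathfrak a$ with $N\mathfrak a=f(n)/p\approx x^{2-\delta}$. Following Heath-Brown, I would write $\mathfrak a$ as a principal ideal times one of finitely many class representatives. Units are absorbed by choosing a generator in a fundamental domain, which introduces constants depending on the regulator. Restrict to those generators $\alpha=a+b\theta+c'\theta^2$ for which $(n-\theta)/\alpha$ is integral. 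The divisibility $\alpha\mid n-\theta$ for some $n$ becomes a lattice condition: the coefficient of $\theta^2$ in $\alpha\beta$ must vanish. This turns the count into counting integer points in a region of a $3$- or $4$-dimensional lattice of determinant roughly $p$. Heath-Brown's device is to bound this geometrically: one counts pairs $(\alpha,\beta)$ with $\alpha\beta=n-\theta$, $N\alpha\approx x^{2-\delta}$, $N\beta\approx x^{1+\delta}$, and uses that the $\theta$- and $\theta^2$-coordinates of $\alpha\beta$ vanish. This is a pair of quadratic forms in six variables. Bounding the solutions by lattice-point counting in small boxes, using successive minima and a Davenport-type lemma, saves a small power over the trivial count on average.

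The main obstacle is this last upper bound. The constant $c$ must be small enough that the saving $\eta$ survives, and everything must be uniform for a general $K$. For $n^3+2$ the ring $\Z[\sqrt[3]{2}]$ is a PID with an explicit unit. Here I would have to handle a general class group, a possibly nonmonogenic $\mathcal{O}_K$ (working in the order $\Z[\theta]$ and accounting for its finite index), both signatures (one or two fundamental units), and ramified primes dividing the discriminant. These only change the constants, which is why $c$ depends on $f$ through the regulator, the class number, and the residue of $\zeta_K$. Concluding, the total is at most $(3-\eta+3\epsilon)x\log x<S(x)$, which is a contradiction. This gives a positive proportion of $n$ with $P^+(f(n))>x^{1+c}$, and hence $P(x,f)\gg x^{1+c}$.
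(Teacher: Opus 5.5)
You have a genuine gap: the one step that carries all the content is not supplied, and the tool you propose for it cannot supply it. First, $2x\log x$ is not a trivial bound coming from sieve dimension. By the per-$n$ factorization it is the exact total contribution of all primes $p>x$. So your target $\sum_{x<p\le x^{1+c}}\log p\,\#\{n\sim x:p\mid f(n)\}\le(2-\eta')x\log x$ is just a restatement of the theorem.

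That target would follow from a Brun--Titchmarsh-type bound $\#\{(n,p):n\sim x,\ p\sim P,\ p\mid f(n)\}\ll x/\log P$, uniformly for $x<P\le x^{1+c}$: sum over the $\asymp c\log x$ dyadic ranges and take $c$ small. But once $N(\alpha)>x$, each generator $\alpha$ contributes $0$ or $1$, according to whether $k_\alpha$ lies in $(x,2x]$. Such a bound therefore requires $k_\alpha/N(\alpha)$ to be equidistributed modulo $1$ at scale $x/N(\alpha)$. This must hold on average over $\alpha$, and also over sublattices $A\mid(\alpha)$ so that a sieve for primality can be run.

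That is a cancellation statement for sums of $e(hk_\alpha/N(\alpha))$, and successive minima or a Davenport-type lemma cannot give it. Fixing either factor and counting lattice points on the resulting line costs $O(1)$ per fixed factor, which is $\gg x^{1+\delta}$ in total. Averaging over $n$ instead returns only the per-$n$ bound (at most two prime ideal factors of norm $>x$), which does not beat $2x\log x$. Nor is this Heath-Brown's device: in \cite{HB} and in the paper, lattice-point counting enters only in the main term $S_0$.

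The missing idea is the paper's treatment of $S_1$. One writes $k_\alpha\equiv B_{23}\overline{B_{13}}\pmod{N(\alpha)}$ (Lemma \ref{5.2}). Then $R_0=qq_0$ and $UB_{22}+VB_{13}=qq_0$ let one replace $e(hk_\alpha/N(\alpha))$ by $e(hU\overline{B_{13}}/q(a_1,a_2))$ up to a negligible error (Lemma \ref{kalpha}), so the modulus no longer depends on $a_0$. The very short sum over $a_0$, of length about $q^{1/3}$, is then bounded by the $q$-analogue of van der Corput, Lemma \ref{HB}.

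That lemma needs $q$ squarefree with prime factors of prescribed sizes ($q_1\approx N^{5/7}$, $q_2\approx N^{6/7}$). This is why the argument has to be a \emph{lower} bound. With Rosser--Iwaniec lower-bound weights one shows that a positive proportion of $n$ have a divisor $KL$ with $X^{1+\delta}<N(KL)\le X^{1+2\delta}$, built from prime ideals of norm $\le X^{1-\delta}$. Positivity allows $L$ to be restricted to the convenient set $\mathcal{L}(K)$: the conditions defining $\mathcal{C}$, $B_{13}\gg M^2$, $\alpha\in\mathcal{D}$, and so on. Lemma 2 then converts this into a large prime factor via the averaging identity.

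In your upper-bound formulation you must control \emph{every} prime in $(x,x^{1+c}]$. You cannot discard moduli without good factorization, so you are back to needing Hooley's unproved estimates for short Kloosterman sums.
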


\subsection{Notation} In the paper we will use letter $P$ for prime ideals in $\mathcal{O}_K$, where $K$ is a field to be defined, and letter $p$ for primes in $\Z$. All logarithms are to base $e$.
The signs $\ll$, $\gg$ are usual Vinogradov signs. The functions $(a,b)$ and $[a,b]$ stand for the greatest common divisor and the least common multiple of $a$ and $b$ respectively. Also we use the common notation for complex exponent:

$e(t)=\exp(2\pi it),$

$e_q(t)=\exp(2\pi it/q).$

\subsection{Description of the method}

Here we explain the method, prove a few preliminary lemmas and discuss the difficulties arising. Essentially it is but a generalization of Heath-Brown's argument described in section 2 in \cite{HB}.

Let $f(n)=x^3+c_2x^2+c_1x+c_0\in\Z[x]$ be an irreducible polynomial. Let $r$ be any of its roots, $K=\Q(r)$, $N(\cdot)=N_{K/\Q}(\cdot)$ the associated norm. Then we see that $N(n-r)=|f(n)|=f(n)$ for large $n$.  We are interested in counting integers $n$ such that the ideal $(n-r)$ has a prime factor of large norm. 

We shall work with the set $\A = \{ n-r : X < n < 2X \}\subset\mathcal{O}_K$ for a large $X$, 

\noindent say $X>3\max\limits_{0\le i\le 2} c_i$. For any ideal $I$  
$$\text{we define }\A_I = \#\{n-r\in\A:\;I\mid n-r\},$$
 
$$\text{and we define }\rho(I) = \#\{n \bmod N(I) : I\mid n-r \}.$$
The following lemma repeats lemma 1 from \cite{HB} and describes $\rho(\cdot).$ We shall prove it in subsection \ref{s2.2}.

\begin{lemma}\label{lemma 1}
Let $I$ be an ideal of $\Z[r]$ such that $(N(I),\text{Disc(f))=1}$.

If $\congruence{n-r}{0}{I}$ is solvable with a rational integer $n$, then $I$ is composed of first degree prime ideals only. If such $I$ is divisible by two distinct prime ideals of the same norm then $\rho(I)=0$; otherwise, the congruence is solvable, and we have $\rho(I) = 1$.

Moreover, if $I$ is an ideal for which $\rho(I)=1$, then for any $m\in\Z$, we have $I\mid m$ if and only if $N(I)\mid m$.
\end{lemma}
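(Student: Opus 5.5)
Since $(N(I),\mathrm{Disc}(f))=1$, every prime $p\mid N(I)$ is coprime to the index $[\mathcal{O}_K:\Z[r]]$. So at such primes $\Z[r]$ is locally maximal and ideals factor uniquely into primes. The plan is to use the Dedekind--Kummer description of primes over $p$, namely $\Z[r]/P\cong \mathbb{F}_p[X]/(g)$ for irreducible factors $g$ of $f \bmod p$, and then reduce everything to prime powers via the Chinese remainder theorem.

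\emph{Step 1 (prime ideals).} If $P\mid n-r$ with $N(P)=p^k$, then $r\equiv n \bmod P$. Hence the residue field $\Z[r]/P$ is generated by the image of $n$, so it equals $\mathbb{F}_p$ and $k=1$. Therefore any $I$ with a solvable congruence is composed of first degree primes. A first degree prime $P$ over $p$ corresponds to a simple root $a\bmod p$ of $f$ (simple because $p\nmid \mathrm{Disc}(f)$), with $P=(p,r-a)$. Distinct first degree primes over $p$ correspond to distinct roots.

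\emph{Step 2 (prime powers).} For $P=(p,r-a)$, Hensel's lemma lifts $a$ uniquely to a root $a_e$ of $f$ modulo $p^e$. I would show that $P^e=(p^e,r-a_e)$. The ideal $J=(p^e,r-a_e)$ has $\Z[r]/J\cong \Z[X]/(p^e,X-a_e,f(X))\cong \Z/p^e$, so $N(J)=p^e$. Moreover $J\subseteq P$, and $J$ is contained in no other prime: another prime over $p$ would force $r\equiv a_e\equiv a$ modulo it. Unique factorisation then gives $J=P^e$. Consequently $P^e\mid n-r$ iff $n\equiv a_e \bmod p^e$, which gives exactly one class modulo $N(P^e)=p^e$, and $\Z\cap P^e=p^e\Z$.

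\emph{Step 3 (gluing).} Write $I=\prod_p I_p$, where $I_p$ is the $p$-part. If some $I_p$ is divisible by two distinct first degree primes $P\neq P'$ over $p$ (necessarily of equal norm $p$), then $P\mid n-r$ and $P'\mid n-r$ would force $n\equiv a$ and $n\equiv a' \bmod p$ with $a\not\equiv a'$, so $\rho(I)=0$. Conversely, two distinct first degree primes of equal norm must lie over the same $p$. So otherwise each $I_p=P^e$ and $N(I)=\prod p^{e}$. The condition $I\mid n-r$ is equivalent to the simultaneous congruences $n\equiv a_{e}\bmod p^{e}$, and CRT gives exactly one solution modulo $N(I)$, i.e.\ $\rho(I)=1$. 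For the final claim, $I\mid m$ iff $I_p\mid m$ for all $p$, iff $p^e\mid m$ for all $p$ by Step 2, iff $N(I)\mid m$.

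The main obstacle is Step 2: making the identification $P^e=(p^e,r-a_e)$ rigorous inside $\Z[r]$ rather than $\mathcal{O}_K$. I would handle it by localizing at $p$, where $\Z[r]_{(p)}=\mathcal{O}_{K,(p)}$ is Dedekind, or equivalently by invoking Dedekind's index criterion, since $p\nmid\mathrm{Disc}(f)$ forces $p\nmid[\mathcal{O}_K:\Z[r]]$. I would also note that the hypothesis of the lemma is to be read with $I$ composed of first degree primes, i.e.\ when $I$ has only such factors.
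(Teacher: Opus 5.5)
Your proof is correct and has the same skeleton as the paper's. Both use the residue field argument to show the primes are first degree, distinct roots mod $p$ to handle two primes of equal norm, Hensel lifting for prime powers, and CRT for gluing.

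The only real difference is the prime-power step. You identify $P^e=(p^e,r-a_e)$ explicitly and read off $\Z[r]/P^e\cong\Z/p^e\Z$, which gives the uniqueness of the residue class and $\Z\cap P^e=p^e\Z$ in one stroke. The paper instead argues from $p^k\mid N(a_k-r)$, using that $P$ is the only norm-$p$ prime dividing $(a_k-r)$, and proves the last claim by a separate induction on $k$. Your localization at $p$ also makes rigorous the unique factorization in $\Z[r]$, which the paper uses without comment.
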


We should stress that the function $\rho(I)$ is not multiplicative. However we do have $\rho(IJ)=\rho(I)\rho(J)$, provided that $N(I)$ and $N(J)$ are coprime. By Lemma~\ref{lemma 1} is clear that
$$\#\mathcal{A}_{I}=\frac{\rho(I)}{N(I)}X+O(1)$$
for any $I$, and we shall define
$$R_{I}=\#\mathcal{A}_{I}-\frac{\rho(I)}{N(I)}X.$$

We may factor the ideal $(n-r)$ as
$$(n-r)=\prod_{P^e\mid n-r, N(P)\le3X}P^e \prod_{P^e\mid n-r, N(P)>3X}P^e.$$
Corresponding to this decomposition we write
$$\log(f(n)))=\log(N(n-r))=\log^{(1)}(f(n))+\log^{(2)}(f(n)), \text{ where}$$

$$\log^{(1)}(f(n)) = \sum_{\substack{I \mid n-r \\ \Lambda(I) \le \log 3X}} \Lambda(I),$$

$$\log^{(2)}(f(n)) = \sum_{\substack{I \mid n-r \\ \Lambda(I) > \log 3X}} \Lambda(I),$$
where $\Lambda(I)$ is the usual Von Mangoldt function over number fields
$$\Lambda(I)=\begin{cases}\log N(I),&I=P^k \\ 0,&\text{otherwise.}\end{cases}$$

Our principal task will be to construct a subset $\A_1\subset\A$, such that
$$\log^{(1)}(f(n))) > (1 + \delta)\log X \quad \text{for } n-r \in \A_1,$$
for some constant $\delta>0$. We shall also require that $\#\A_1 = X_1 =(\alpha+o(1))X$ for some real constant $\alpha>0$.

Now suppose that among the set $\A_2 = \A \setminus \A_1$ there are precisely $X_2$ elements with
$\log^{(1)}(f(n)) \ge (1 - \delta')\log X,$
where $\delta'$ is to be chosen later. It then follows that
\begin{equation}\begin{split}\sum_{n-r \in \A} \log^{(1)}(f(n)) \ge \sum_{n-r\in\A_1}\log^{(1)}(f(n))+\sum_{n-r\in\A_2}\log^{(1)}(f(n))\ge\\ X_1(1 + \delta)\log X + X_2(1 - \delta')\log X.\label{log1}\end{split}\end{equation}

On the other hand, since $f(n)\le9X^3$, we have
$$\sum_{n-r \in \A} \log^{(1)}(f(n))= \sum_{\substack{N(I) \le 9X^3 \\ \Lambda(I) \le \log 3X}} \Lambda(I) \cdot \#\A_{I} = \sum _{\substack{N(I) \le 9X^3 \\ \Lambda(I) \le \log 3X}}\Lambda(I) \left( \frac{\rho(I)}{N(I)}X + O(1) \right).$$

The error term is 
$$\ll \sum_{N(P)\le 3X}\log N(P)\sum_{e:\: N(P^e)\le 9X^3}1\ll \sum_{N(P)\le 3X}\log X\ll X$$
by the prime ideal theorem.

It is clear that the contribution of prime powers in the main term is negligible, therefore it is 
$$X\sum_{N(P)\le 3X}\frac{\rho(P)\log N(P)}{N(P)}=X\log X+O(X)$$
by the prime ideal theorem again. 

Thus
$$\sum_{n-r \in \A} \log^{(1)}(f(n))=X\log X+O(X),$$
and combining the above with (\ref{log1}) we get
\begin{equation}X+O(X/\log X)\ge X_1(1 + \delta)+ X_2(1 - \delta').\label{log2}\end{equation}

Now, if we set $\A_3 = \{ n-r \in \A : \log^{(1)}(f(n)) < (1 - \delta')\log X \}$, with $\#\A_3 = X_3 = X - X_1 - X_2$, replacing $X_2$ by $X-X_1-X_3$ in \eqref{log2} we get
$$X+O(X/\log X)\ge X_1(1 + \delta)+ (X-X_1-X_3)(1 - \delta').$$

This implies
$$X_3\ge X_3(1-\delta')\ge X_1(\delta+\delta')-X\delta'+O(X/\log X),$$
we shall choose $\delta'=\delta\frac{X_1}{X}+\epsilon$ with small $\epsilon>0$, so that $X_1\delta-\delta'X=-\epsilon X$ and
$$X_3\ge (\delta(X_1/X)^2-\epsilon)X+O(X/\log X).$$

Since for $n-r\in\A$
$$\sum_{I\mid n-r}\Lambda(I)=\log N(n-r)>(3+o(1))\log X,$$
we deduce that for any $n-r\in\A_3$
$$\log^{(2)}(f(n))=\log(f(n))-\log^{(1)}(f(n))>(2+\delta'+o(1))\log X.$$

The total number of all prime ideal factors $p$ counted with multiplicity by $\log^{(2)}(f(n))$ can be at most 2 since product of the norms of these ideals is $\le f(n)\le (3X)^3$. Then for any $n-r\in\A_3$ there must be a factor with 
$$\log N(p)\ge \frac{1}{2}\log^{(2)}f(n)\ge (1+\delta'/2+o(1))\log X,\text{ i.e. } N(p)\ge X^{1+\delta'/2+o(1)}.$$

If we pick $\epsilon$ small enough and $X$ large enough, $\epsilon$ cancels $o(1)$ in the power, while the density of $\A_3$ is still positive.

The following lemma summarizes the argument above and reflects Heath-Brown's lemma 2 from \cite{HB}.
\begin{lemma}
Let $\alpha$ and $\delta$ be positive constants. Suppose there exists a subset $\A_1 \subset \A$ as above. Then at least $(\alpha^2\delta + o(1))X$ integers $n \in (X, 2X)$ are such that $f(n)$ has a prime factor $p \gg X^{1 + \alpha\delta/2}$.
\end{lemma}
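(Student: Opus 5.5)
The proof is essentially a bookkeeping version of the argument just given. We compute $\sum_{n-r\in\A}\log^{(1)}(f(n))$ in two ways: a lower bound from the hypothesis on $\A_1$, and an asymptotic from the prime ideal theorem. The leftover set $\A_3$ is then forced to have positive density, and each of its elements must carry a prime ideal factor of norm larger than $X^{1+\alpha\delta/2}$.

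\textbf{Step 1: upper bound for the total.} Since $f(n)\le 9X^3$, we expand
$$\sum_{n-r\in\A}\log^{(1)}(f(n))=\sum_{\substack{N(I)\le 9X^3\\ \Lambda(I)\le\log 3X}}\Lambda(I)\,\#\A_I=\sum_{\substack{N(I)\le 9X^3\\ \Lambda(I)\le\log 3X}}\Lambda(I)\Bigl(\frac{\rho(I)}{N(I)}X+O(1)\Bigr),$$
using Lemma~\ref{lemma 1}. The $O(1)$ terms contribute $O(X)$ by the prime ideal theorem. In the main term, prime powers of exponent at least $2$ contribute $O(X)$, since $\sum_P\log N(P)/N(P)^2$ converges. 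The remaining term is $X\sum_{N(P)\le 3X}\rho(P)\log N(P)/N(P)$. By Lemma~\ref{lemma 1}, $\rho(P)=1$ exactly for first degree primes (away from the discriminant) and $0$ otherwise. Primes of degree at least $2$ satisfy $\sum\log N(P)/N(P)=O(1)$, so the prime ideal theorem for $K$ (Mertens form) gives $X\log X+O(X)$. Altogether,
$$\sum_{n-r\in\A}\log^{(1)}(f(n))=X\log X+O(X).$$

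\textbf{Step 2: lower bound and the density of $\A_3$.} We split $\A=\A_1\cup\A_2'\cup\A_3$, where $\A_2'$ consists of the $n-r\notin\A_1$ with $\log^{(1)}(f(n))\ge(1-\delta')\log X$, and $\A_3$ is the rest. Dropping $\A_3$ (its terms are nonnegative) gives
$$X\log X+O(X)\ge X_1(1+\delta)\log X+X_2(1-\delta')\log X.$$
Substituting $X_2=X-X_1-X_3$ yields $X_3\ge X_1(\delta+\delta')-\delta'X+O(X/\log X)$. We choose $\delta'=\alpha\delta+\epsilon$. Then, since $X_1=(\alpha+o(1))X$,
$$X_3\ge(\alpha^2\delta-\epsilon+o(1))X.$$

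\textbf{Step 3: locating a large prime.} For $n-r\in\A_3$ we have $\log f(n)=(3+o(1))\log X$, hence $\log^{(2)}(f(n))>(2+\delta'+o(1))\log X$. Every prime ideal counted by $\log^{(2)}$ has norm greater than $3X$, while $f(n)\le(3X)^3$. So at most two such factors occur, counted with multiplicity, and one of them has norm at least $X^{1+\delta'/2+o(1)}$. Its norm is a rational prime or a prime power, and must in fact be a prime $p\mid f(n)$. The reason is that $N(P)=p^k$ with $k\ge2$ would force $p\mid n-r$ in $\mathcal O_K$ if $P$ is unramified of higher degree, which is impossible since $p\nmid 1$. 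Ramified $P$ divide the discriminant and have bounded norm, so they are excluded.

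Since $\delta'/2>\alpha\delta/2$, the $o(1)$ in the exponent is absorbed, giving $p\gg X^{1+\alpha\delta/2}$. Finally we let $\epsilon\to0$ slowly as $X\to\infty$, which turns the count into $(\alpha^2\delta+o(1))X$.

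The main obstacle is Step 1, namely getting the main term precisely $X\log X+O(X)$. This requires exactly that $\rho(P)=1$ only for first degree primes, that higher degree primes contribute only $O(1)$ to the Mertens-type sum, and a uniform treatment of the finitely many primes dividing $\mathrm{Disc}(f)$, where Lemma~\ref{lemma 1} does not apply but the contribution is trivially $O(X)$.
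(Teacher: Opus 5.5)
Your proposal is the paper's argument. You use the same two-way evaluation of $\sum_{n-r\in\A}\log^{(1)}(f(n))$ and the same choice of $\delta'$; your constant $\alpha\delta+\epsilon$ in place of the paper's $\delta X_1/X+\epsilon$ makes no difference. You also use the same observation that at most two prime ideals of norm $>3X$ can divide $n-r$. Letting $\epsilon\to0$ to obtain exactly $(\alpha^2\delta+o(1))X$ is, if anything, more careful than the paper.

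One justification needs repair. Your reason why the large prime ideal $P$ has prime norm (``$N(P)=p^k$, $k\ge 2$, would force $p\mid n-r$ \ldots\ since $p\nmid 1$'') only works when $p$ is inert. If $p=PP'$ with $\deg P=2$, nothing you wrote shows that $P\mid n-r$ implies $p\mid n-r$.

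Use the first part of Lemma~\ref{lemma 1} instead. Since $N(P)>3X$, we have $p\nmid\mathrm{Disc}(f)$, so $p$ does not divide the index of $\mathbb{Z}[r]$ in $\mathcal{O}_K$. Then $n\equiv r \bmod P$ makes $\mathcal{O}_K/P$ generated by $1$. Hence $N(P)=p$ is a rational prime dividing $N(n-r)=f(n)$.
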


In order to construct the elements $n-r$ of $\A_1$ we arrange that $n-r$ has an ideal factor $J=KL$ with 

\begin{equation}X^{3\delta}<N(K)\leq X^{4\delta}.\label{norm_k}\end{equation}

\begin{equation}X^{1+\delta}<N(KL)\leq X^{1+2\delta}.\label{norm_kl}\end{equation}

For $K$ and $L$ as above we have
$$
N(L)=\frac{N(J)}{N(K)}\leq X^{1-\delta}.
$$

Since any factor $P$ of $J$ must divide either $K$ or $L$, it follows that $N(P)\leq X^{1-\delta}$. Thus

\begin{equation}\begin{split}\log^{(1)}(f(n))=\sum_{I\mid n-r,\Lambda(I)\leq\log 3X}\Lambda(I)
\geq\sum_{I\mid J,\Lambda(I)\leq\log 3X}\Lambda(I)
=\\\sum_{I\mid J}\Lambda(I)
=\log N(J)
\geq(1+\delta)\log X,\end{split}\end{equation}
as required.

We shall take $K$ to run over a set $\mathcal{K}$ of first degree prime ideals to be described, subject to (\ref{norm_k}). For each such $K$ we shall let $L$ run over a set $\mathcal{L}(K)$ to be described, subject to (\ref{norm_kl}). We avoid the fact that a given $n-r$ might occur many times as a multiple of a fixed $J$ by ensuring that $L$ is composed only of prime ideals $P$ with $N(P)>X^{\delta}$. More precisely, we sieve $L$ from below, to level $X^{\delta}$. Thus we take $\lambda_{d}$ to be the Rosser-Iwaniec weights for the lower bound sieve of dimension 1 and sieving limit $D=X^{3\delta}$, as described by Iwaniec \cite{iwaniec}, for example. These are supported on the square-free integers $d\leq X^{3\delta}$ and have the properties that

$$|\lambda_{d}|\leq 1,$$
and
\begin{equation}\sum_{d\mid n}\lambda_{d}\leq
\begin{cases}
1&\text{if }n=1,\\
0&\text{if }n\geq 2.
\end{cases}\label{rosser} \end{equation}

Moreover, for any non-negative multiplicative function $g(d)$ satisfying $g(p)<p$ for all primes, and

$$\prod_{w\leq p<z}\left(1-\frac{g(p)}{p}\right)^{-1}\leq\frac{\log z}{\log w}\left\{1+O\left(\frac{1}{\log w}\right)\right\}$$
for all $z>w\geq 2$, we have

$$\sum_{d: p\mid d\Rightarrow p<X^{\delta}}g(d)\lambda_{d}d^{-1}\geq\{C_{0}+o(1)\}\prod_{p<X^{\delta}}\left(1-\frac{g(p)}{p}\right), $$
where $C_{0}=\frac{2}{3}e^{\gamma}\log 2.$

We shall set $Q=\mathop{\prod\nolimits'}\limits_{p<X^{\delta}} p,$ the product being restricted to primes $p$ which split in $\Q(r)$. We then proceed to consider

$$S=\sum_{K\in\mathcal{K}}\sum_{L\in\mathcal{L}(K)}\left(\sum_{d\mid Q,N(L)}\lambda_{d}\right)\#\mathcal{A}_{KL}.$$

In view of (\ref{rosser}) we see that
$$S\leq\sum_{K\in\mathcal{K}}\sum_{\substack{L\in\mathcal{L}(K)\\ (N(L),Q)=1}}\#\mathcal{A}_{KL}$$$$
=\sum_{n-r\in\mathcal{A}}\#\{(K,L)\colon K\in\mathcal{K},L\in\mathcal{L}(K),(N(L),Q)=1,KL\mid n-r\}.$$

By construction, any $n-r$ which is counted with positive weight in the above sum will meet our requirements. However
$$f(n)\leq 9X^{3}<X^{(1+[3\delta^{-1}])\delta},$$
for large enough $X$. We therefore see that $n-r$ can have at most $[3\delta^{-1}]$ prime ideal factors $P$, counted according to multiplicity, for which $N(P)\geq X^{\delta}$. Similarly, there can be at most $[\delta^{-1}]$ prime ideal factors $P$ for which $N(P)>X^{3\delta}$. Moreover, if $\mathcal{A}_{KL}\neq\emptyset$ and $(N(L),Q)=1$ then $L$ must be composed of first degree prime ideals $P$ with $N(P)\geq X^{\delta}$. It follows that there are at most $[\delta^{-1}]$ possible choices for $K$, and at most $2^{[3/\delta]}$ possible choices for $L$, for any given choice of $n-r$. We have therefore produced at least $\delta 2^{-[3/\delta]}S$ suitable values of $n-r$.

On the other hand,
\begin{equation}
S=\sum_{K\in\mathcal{K}}\sum_{L\in\mathcal{L}(K)}\left(\sum_{d\mid Q,N(L)}\lambda_{d}\right)\left\{X\frac{\rho(KL)}{N(KL)}+R_{KL}\right\}=XS_{0}+S_{1},
\end{equation}\label{sec:test}
where
$$S_{0}=\sum_{K\in\mathcal{K}}\sum_{L\in\mathcal{L}(K)}\left(\sum_{d\mid Q,N(L)}\lambda_{d}\right)\frac{\rho(KL)}{N(KL)},$$
and
$$S_{1}=\sum_{K\in\mathcal{K}}\sum_{L\in\mathcal{L}(K)}\left(\sum_{d\mid Q,N(L)}\lambda_{d}\right)R_{KL}.$$

Adding up the analysis above we get the following result, which is lemma 3 from \cite{HB}.

\begin{lemma}
Suppose that $S_{1}=o(X)$. Then any constant $\alpha>0$ satisfying
$$
\alpha<\delta 2^{-[3/\delta]}S_{0}
$$
will be acceptable in Lemma 2.
\end{lemma}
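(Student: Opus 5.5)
The plan is to assemble Lemma 3 from the counting argument just given, feeding its output into Lemma 2. We define $\A_1$ to be the set of $n-r\in\A$ counted with positive weight in the upper bound for $S$, namely those for which there exist $K\in\mathcal{K}$ and $L\in\mathcal{L}(K)$ with $(N(L),Q)=1$ and $KL\mid n-r$. By \eqref{norm_k}, \eqref{norm_kl} and the computation following them, every such $n-r$ has $J=KL$ dividing it with all prime factors of norm at most $X^{1-\delta}$. Hence $\log^{(1)}(f(n))\ge \log N(J)>(1+\delta)\log X$, which is the defining property required of $\A_1$ in Lemma 2.

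The core step is the lower bound for $\#\A_1$. By \eqref{rosser}, the weight $\sum_{d\mid (Q,N(L))}\lambda_d$ is at most $1$ when $(N(L),Q)=1$ and at most $0$ otherwise. Since $\#\A_{KL}\ge 0$, this gives
\[
S\le\sum_{n-r\in\A}\#\{(K,L):K\in\mathcal{K},\ L\in\mathcal{L}(K),\ (N(L),Q)=1,\ KL\mid n-r\}.
\]
For a fixed $n-r$, the inner count is zero unless $n-r\in\A_1$. When it is nonzero, the multiplicity bound applies. The ideal $K$ is a prime of norm greater than $X^{3\delta}$ dividing $n-r$, so there are at most $[\delta^{-1}]$ choices for it. The ideal $L$ is a product of primes of norm at least $X^{\delta}$ drawn from at most $[3/\delta]$ such factors, so it is determined by a subset, giving at most $2^{[3/\delta]}$ choices. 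I would double-check here that $(N(L),Q)=1$ together with $\rho(KL)\ne0$ forces $L$ to consist of first degree primes of norm at least $X^\delta$; this uses Lemma~\ref{lemma 1}, and I would note that primes dividing $\mathrm{Disc}(f)$ are bounded, so they lie below $X^\delta$. The conclusion is $\#\A_1\ge \delta 2^{-[3/\delta]}S$.

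Finally, we have $S=XS_0+S_1$, and the hypothesis gives $S_1=o(X)$. Therefore $\#\A_1\ge(\delta2^{-[3/\delta]}S_0+o(1))X$. For any $\alpha<\delta2^{-[3/\delta]}S_0$ we get $\#\A_1\ge(\alpha+o(1))X$ for large $X$, after passing to a subset of $\A_1$ if an exact density $\alpha$ is needed; shrinking $\A_1$ keeps the property $\log^{(1)}>(1+\delta)\log X$. Lemma 2 then applies with this $\alpha$.

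The only delicate point is the multiplicity count. It relies on $f(n)\le 9X^3<X^{(1+[3/\delta])\delta}$ and on using the norm lower bounds to limit the number of large prime factors; everything else is bookkeeping. One should also make sure $S_0$ is treated as bounded below independently of $X$, or else read the inequality for $\alpha$ in the limiting sense.
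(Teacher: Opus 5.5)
Your proposal is correct and follows the same argument as the paper: bound $S$ above via \eqref{rosser} by the number of pairs $(K,L)$ with $KL\mid n-r$ and $(N(L),Q)=1$, bound the number of such pairs for a given $n-r$ by $[\delta^{-1}]\,2^{[3/\delta]}$, and then use $S=XS_0+S_1$ together with $S_1=o(X)$. Your remark about passing to a subset of $\A_1$ to obtain density exactly $\alpha$ is a harmless refinement that the paper leaves implicit.
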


So we have to provide for each $K\in\mathcal{K}$ a set $ \mathcal{L}(K)$ such that $S_0\gg1$ and $S_1=o(x)$.
\section{Auxiliary lemmae}
\subsection{Counting ideals}

    The following version of the prime ideal theorem is just enough for our purposes. For a reference one may use Elementary and Analytic Theory of Algebraic Numbers by Narkiewicz \cite{nark}.
    \begin{theorem*}Let $K$ be a number field and $N_K(\cdot)$ the corresponding norm, then we have
        $$\pi_K(x)=\#\{P\mid N_K(P)\le x\}=\frac{x}{\log x}(1+o(1)).$$
    \end{theorem*}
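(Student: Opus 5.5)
The plan is to follow the classical route: pass from $\pi_K$ to the Dedekind zeta function $\zeta_K(s)=\sum_{I}N(I)^{-s}$, use its analytic behaviour near and on the line $\mathrm{Re}\, s=1$, and then apply a Tauberian argument. First I will set up the prime-ideal analogue of Chebyshev's function,
$$\psi_K(x)=\sum_{N(I)\le x}\Lambda(I).$$
Prime powers $P^k$ with $k\ge2$ and first-degree primes of higher residue degree contribute only $O(\sqrt{x}\log x)$, since each rational prime $p$ has at most $[K:\Q]$ primes above it. Partial summation then shows that $\pi_K(x)\sim x/\log x$ is equivalent to $\psi_K(x)\sim x$, so it suffices to prove the latter.

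Next I will need two analytic inputs on $\zeta_K$. The first is that $\zeta_K(s)$ has a meromorphic continuation to $\mathrm{Re}\, s>1-1/[K:\Q]$ whose only singularity there is a simple pole at $s=1$. I will get this from the Hecke/Dedekind lattice-point count $\#\{I: N(I)\le x\}=\kappa_K x+O(x^{1-1/[K:\Q]})$, obtained by counting in each ideal class the lattice points in a fundamental domain for the unit action, and then applying partial summation. The second input is that $\zeta_K(1+it)\ne0$ for real $t\ne0$. For this I will use the Mertens trick: the Euler product gives $\log|\zeta_K(\sigma+it)|=\sum_{P,k} k^{-1}N(P)^{-k\sigma}\cos(kt\log N(P))$, and the inequality $3+4\cos\theta+\cos2\theta\ge0$ yields
$$|\zeta_K(\sigma)^3\zeta_K(\sigma+it)^4\zeta_K(\sigma+2it)|\ge1 \quad\text{for }\sigma>1.$$
A zero at $1+it$ would be of order at least four, which together with the simple pole at $s=1$ would force this product to tend to $0$ as $\sigma\to1^+$, a contradiction.

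From these two facts, $-\zeta_K'/\zeta_K(s)-1/(s-1)$ extends continuously to the closed half-plane $\mathrm{Re}\, s\ge1$. The Wiener--Ikehara theorem, or Newman's short Tauberian argument applied to $\sum_{P}\log N(P)\,N(P)^{-s}$ with the same Chebyshev-type bound $\theta_K(x)\ll x$, then gives $\psi_K(x)\sim x$. The Chebyshev bound itself follows by comparing with rational primes, since $\theta_K(x)\le[K:\Q]\,\theta_\Q(x)$. Partial summation finishes the proof.

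The main obstacle is the analytic continuation, i.e.\ the ideal-counting asymptotic with a power saving. It needs finiteness of the class number and Dirichlet's unit theorem to build the fundamental domain, together with a Lipschitz-boundary lattice-point count. In the write-up I would simply cite Narkiewicz for this step, and also for the nonvanishing, rather than reprove them, since the statement is standard and is only used as a tool here.
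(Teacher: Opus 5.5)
Your outline is correct. It is the standard Dedekind-zeta argument: the power-saving ideal count gives continuation past $\mathrm{Re}\, s=1$, the inequality $3+4\cos\theta+\cos2\theta\ge0$ gives non-vanishing on $\mathrm{Re}\, s=1$, and Wiener--Ikehara or Newman finishes. The paper gives no proof of its own and simply quotes the result from Narkiewicz, the same source you propose to cite for the two analytic inputs.

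Two phrasings need tidying. The negligible primes are those of residue degree at least $2$, not ``first-degree primes of higher residue degree''. In the Mertens step, it is the factor $\zeta_K(\sigma+it)^4$ that vanishes to order at least four as $\sigma\to1^+$, not the zero itself that has order four.
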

     The following lemma is quite standard, for a reference one may use Elementary and Analytic Theory of Algebraic Numbers by Narkiewicz \cite{nark} once again. 
    \begin{lemma}\label{resid} For a number field $K$ denote $\zeta_K(s)$ its zeta function, $N(\cdot)$ its norm and $h_K$ its class number. Then
    $$\sum_{\substack{\alpha\text{ principal}\\ N(\alpha)\le x}}\frac{1}{N(\alpha)}=\frac{1}{h_K}\res{s=1}\zeta_K(s)\log x+O(1).$$
    \end{lemma}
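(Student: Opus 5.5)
The plan is to reduce the sum over principal ideals to the full sum over ideals in a fixed ideal class, and evaluate that sum by partial summation from the standard asymptotic count of ideals in a class. The input I use is the classical Weber--Landau estimate: for each ideal class $\mathcal{C}$ of $K$,
$$\#\{\mathfrak{a}\in\mathcal{C}:\ N(\mathfrak{a})\le x\}=\kappa_K x+O\bigl(x^{1-1/n}\bigr),\qquad n=[K:\Q],$$
where $\kappa_K$ does not depend on the class. This is proved in Narkiewicz by counting lattice points of $\mathfrak{b}\mathcal{O}_K$ in a cone cut out by a fundamental domain for the unit group, scaled by $x^{1/n}$. Summing over the $h_K$ classes shows that the total count of ideals of norm at most $x$ is $h_K\kappa_K x+O(x^{1-1/n})$. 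The usual Dirichlet-series argument (compare $\zeta_K(s)$ with $h_K\kappa_K\zeta(s)$, whose difference is holomorphic for $\Re s>1-1/n$) then identifies $h_K\kappa_K=\res{s=1}\zeta_K(s)$. Hence the principal class alone has counting function $A(x)=\frac{1}{h_K}\res{s=1}\zeta_K(s)\,x+O(x^{1-1/n})$.

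Next I apply partial summation. Writing $c=\frac{1}{h_K}\res{s=1}\zeta_K(s)$ and $E(t)=A(t)-ct\ll t^{1-1/n}$, we get
$$\sum_{\substack{\alpha\text{ principal}\\ N(\alpha)\le x}}\frac{1}{N(\alpha)}=\frac{A(x)}{x}+\int_1^x\frac{A(t)}{t^2}\,dt = c+O(x^{-1/n})+c\log x+\int_1^x\frac{E(t)}{t^2}\,dt.$$
The integral converges absolutely as $x\to\infty$, since $E(t)/t^2\ll t^{-1-1/n}$. So everything except $c\log x$ is $O(1)$, which is the claim; the lower limit can be taken as $1^-$ to include the unit ideal.

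The only step with real content is the lattice-point count with a class-independent constant and an error $o(x)$. Here I quote the textbook result rather than reprove it. The statement needs only some error of size $O(x^{1-\eta})$ with $\eta>0$, because that makes $\int E(t)t^{-2}\,dt$ converge. If one wishes to avoid the explicit constant $\kappa_K$, the residue identification can also be done directly. Twist $\zeta_K$ by characters $\chi$ of the class group and use orthogonality: the principal-class Dirichlet series equals $\frac{1}{h_K}\sum_\chi L(s,\chi)$. For $\chi\ne1$ each $L(s,\chi)$ is holomorphic at $s=1$, so the residue of this series at $s=1$ is $\frac{1}{h_K}\res{s=1}\zeta_K(s)$, which again pins down $c$.
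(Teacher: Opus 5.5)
Your proof is correct. You use the class-wise Weber--Landau count with a class-independent constant $\kappa_K$, identify $h_K\kappa_K=\res{s=1}\zeta_K(s)$ through the Dirichlet series, and then apply partial summation with a convergent error integral; this gives exactly $\frac{1}{h_K}\res{s=1}\zeta_K(s)\log x+O(1)$. The paper gives no proof of its own and only cites Narkiewicz, and your argument is the standard textbook derivation found there, so it is essentially the approach the paper relies on.
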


\subsection{Properties of $\rho$}\label{s2.2}

Here we prove Lemma \ref{lemma 1} describing
$$\rho(I) = \#\{n \bmod N(I) : I\mid n-r \}.$$
\begin{remark}
The condition $(N(I),\text{Disc}(f))=1$ is needed to ensure $I$ consists of prime ideals above non-ramified primes, which is necessary for Hensel lemma providing unique solutions to the congruences.
\end{remark}
Let $P$ be a prime ideal, containing $n-r$, i.e. such that $\congruence{n}{r}{P}$. Then any element of $\Z[r]$ is congruent to an integer ($\in\Z$). However $\Z[r]/P$ is a vector space with a generating set $1,r,r^2$, hence $1$ is a basis of $\Z[r]/P$, meaning it is one-dimensional vector space over $\mathbb{F}_p$ for certain $p\in P$. Therefore $N(P)=\#\Z[r]/P=p$.

Consider $P_1\neq P_2$ such that $P_i\mid I$ and $N(P_i)=p$.
Here we have that $p\Z[r]=P_1P_2P_3$ where $P_i=(p,n_i-r)$ with $n_i$ being roots of $f$ in $\Z/p\Z$. As $P_1\neq P_2$ we also have $n_1\not\equiv n_2\bmod p$, but $n_i\equiv r\bmod P_i$, and $n\equiv r\bmod P_i$ for $i=1,2$. This implies that $n_i\equiv n\bmod P_i$, while we can assume one of $n_i$, say $n_1$, is not congruent to $n$ modulo $p$. That is $n_1\not\equiv n\bmod p$ and $n_1\equiv n\bmod P_1$, together with the fact $P_1\cap\Z=p\Z$ we get a contradiction.

Now we prove the inverse direction. Let $I=\prod\limits_i P_i^{\alpha_i}$, such that $N(P_i)=p_i$ with $p_i\neq p_j$ if $i\neq j$. By chinese remainder theorem it is enough to consider the case $I=P^\alpha$, $N(P)=p$ and $P=(p,a-r)$ with $0\le a\le p-1$ simple root of $f$ modulo $p$.
Then we apply Hensel lemma to obtain an unique $0\le a_k\le p^k-1$ for each $k\in\N$ such that $\congruence{a_k}{a}{p}$ and $f(a_k)\equiv0\bmod p^k$. It implies that $P\mid a_k-r$ and $p^k\mid f(a_k)=N(a_k-r)$. $P$ is the only ideal of norm $p$ dividing $(a_k-r)$, then we conclude $P^k\mid a_k-r$, i.e. $\rho(P^k)= 1$.

It remains to show the equivalence announced at the end of the lemma: $I\mid m \iff N(I)\mid m$ for ideals with $\rho(I)=1$.

$\Leftarrow$ is obvious as $I\mid N(I)$. So we proceed on  $\Rightarrow$.

It is enough to consider $I=P^k$ with $N(P)=p$.
We prove that by induction. If $k=1$, $P\mid m\Rightarrow N(P)\mid N(m)= m^3$, as $N(P)$ is prime we see $N(P)\mid m.$

Assume now we have proven the equivalence up to $k-1$. We write $I=P^k=P\cdot P^{k-1}$. In the lemma we consider first degree prime ideals only, so the prime above $P$ decomposes as $p=PJ$ with $(P,J)=1$, therefore $P^{k-1}\mid m/p$, then by the induction hypothesis $N(P^{k-1})\mid m/p$, $N(P^k)\mid m.$

\subsection{The roots of $f$ modulo $m$}
This subsection closely follows subsection 5.2 from \cite{DM}.

For $\alpha\in\Z[r]$ we write $\alpha=a_0+a_1r+a_2r^2.$

Let $m_\alpha:\Q(r)\to\Q(r)$ be the multiplication map $m_\alpha(x)=\alpha x$. Let $M_\alpha$ be its matrix with respect to the basis $\{1,r,r^2\}.$

Since $r^3+c_2r^2+c_1r+c_0=0$ we have
\begin{equation}\label{M_alpha}
M_\alpha=\begin{pmatrix}
a_0 & -c_0a_2 & a_2c_0c_2 - a_1c_0\\
a_1 & a_0-c_1a_2 & a_2c_1c_2 - a_2c_0 - a_1c_1\\
a_2 & a_1-c_2a_2  & a_2c_2^2 - a_2c_1 - a_1c_2 + a_0
\end{pmatrix}.
\end{equation}

Let $B_{ij}=B_{ij}(\alpha)$ be the cofactors of $M_\alpha$, i.e. the determinant of the matrix formed by removing line $i$ and column $j$ from $M_\alpha$ multiplied by $(-1)^{i+j}$. It should be clear from the context if we mean a matrix or a polynomial in $a_0,a_1,a_2$ by $B_{ij}$.

The following is an analog of lemma 5.2 from \cite{DM}.
\begin{lemma}\label{5.2}
    If $\alpha$ is such that $(N(\alpha),B_{13}\text{Disc}(f))=1$ then there exists an integer $0\le k_\alpha<N(\alpha)$ such that 
    $$\congruence{n-r}{0}{\alpha}\iff \congruence{n}{k_\alpha}{N(\alpha)},$$
    also $k_{\alpha}$ satisfies the congruence
    $$k_{\alpha}\equiv B_{23}\overline{B_{13}}\bmod{N(\alpha)}.$$

    Furthermore, if $J$ is an ideal of $\Z[r]$ containing a principal ideal $(\alpha)$ with $\alpha$ as above then there exists $0\leq k_J<N(J)$ such that
    $$\congruence{n-r}{0}{J}\Leftrightarrow n\equiv k_J\bmod{N(J)}.$$
\end{lemma}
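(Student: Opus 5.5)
We view the condition $\alpha\mid n-r$ as a linear-algebra statement about the matrix $M_\alpha$. Write $q=N(\alpha)=|\det M_\alpha|$. Since $\alpha$ is a unit modulo nothing in particular, we instead use the adjugate. Let $\mathrm{adj}(M_\alpha)$ be the adjugate, so $\mathrm{adj}(M_\alpha)M_\alpha=\det(M_\alpha)\,\mathrm{Id}$. Then $\alpha\mid\beta$ in $\Z[r]$ iff $\beta/\alpha\in\Z[r]$ iff $M_\alpha^{-1}v_\beta\in\Z^3$, where $v_\beta$ is the coordinate vector of $\beta$ in the basis $\{1,r,r^2\}$. Here we use that $\Z[r]$ is a free module with this basis and that $M_\alpha^{-1}v_\beta$ is the coordinate vector of $\beta/\alpha\in K$.

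So the condition is $\mathrm{adj}(M_\alpha)v_\beta\equiv 0\bmod q$. For $\beta=n-r$ we have $v_\beta=(n,-1,0)^T$. Row $j$ of $\mathrm{adj}(M_\alpha)$ consists of the cofactors $B_{1j},B_{2j},B_{3j}$. This gives three congruences
$$nB_{1j}-B_{2j}\equiv 0\bmod q,\qquad j=1,2,3.$$
The one with $j=3$ reads $nB_{13}\equiv B_{23}$. Since $(q,B_{13})=1$, it forces $n\equiv B_{23}\overline{B_{13}}\bmod q$. Hence the solution set is contained in a single residue class $k_\alpha$ modulo $q$.

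The main obstacle is showing the converse: that this class really satisfies the other two congruences, so the solution set is nonempty and equals that class. I would argue by counting rather than by polynomial identities among the cofactors. The map $\Z/q\to \Z[r]/(\alpha)$, $n\mapsto n-r$, is relevant, and $\#\Z[r]/(\alpha)=q$. The hypothesis $(q,B_{13}\mathrm{Disc}(f))=1$ should imply $\rho((\alpha))=1$ via Lemma~\ref{lemma 1}. For this we show $(\alpha)$ is composed of first degree primes with distinct norms. Suppose $(\alpha)$ had a prime factor $P$ of degree $\ge2$, or two distinct primes $P_1,P_2$ above the same $p$. Then I expect to show $p\mid B_{13}$. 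Indeed, $B_{13}$ is, up to sign, a $2\times2$ minor of $M_\alpha$ from rows $2,3$ and columns $1,2$. If $M_\alpha$ mod $p$ has rank $\le1$, all $2\times2$ minors vanish mod $p$. This rank condition is exactly the case where $\Z[r]/(\alpha,p)$ has dimension $\ge2$ over $\mathbb{F}_p$, i.e. $\alpha\in P$ with $N(P)\ge p^2$ or $\alpha\in P_1P_2$. Hence $p\mid B_{13}$, a contradiction.

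Therefore Lemma~\ref{lemma 1} gives exactly one class modulo $q$ of $n$ with $\alpha\mid n-r$. It must be the class $k_\alpha$ found above, which proves the equivalence. Coprimality with $\mathrm{Disc}(f)$ lets us invoke Lemma~\ref{lemma 1}. Unramified primes also ensure that $\Z[r]$ agrees with $\mathcal{O}_K$ locally.

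For the statement about $J\supseteq(\alpha)$, note that $N(J)\mid N(\alpha)$, so $(N(J),\mathrm{Disc}(f))=1$ and $J\mid(\alpha)$. Every prime factor of $J$ divides $(\alpha)$, so $J$ is composed of first degree primes with distinct norms, and Lemma~\ref{lemma 1} gives $\rho(J)=1$. That yields a unique residue $k_J$ modulo $N(J)$. Concretely, $k_J\equiv k_\alpha\bmod N(J)$, because $n=k_\alpha$ gives $J\mid(\alpha)\mid k_\alpha-r$. Uniqueness follows from the last part of Lemma~\ref{lemma 1}: if $J\mid n-r$ and $J\mid n'-r$, then $J\mid n-n'$, so $N(J)\mid n-n'$.
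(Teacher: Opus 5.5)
Your proof is correct. It differs from the paper's mainly in how you show that the class $k_\alpha$ actually consists of solutions.

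The paper applies Cramer's rule to the relations $\alpha r^{j}\equiv 0 \pmod{(\alpha)}$ to get $B_{1i}r\equiv B_{2i}\pmod{(\alpha)}$, and takes $i=3$. Since $B_{13}$ is invertible modulo $N(\alpha)\in(\alpha)$, this says $r\equiv B_{23}\overline{B_{13}}\pmod{(\alpha)}$, which immediately shows that every $n\equiv k_\alpha$ works. For the forward direction the paper invokes the last part of Lemma~\ref{lemma 1}, to pass from divisibility of an integer by $(\alpha)$ to divisibility by $N(\alpha)$.

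Your criterion $\alpha\mid\beta\iff\mathrm{adj}(M_\alpha)v_\beta\equiv 0 \pmod{N(\alpha)}$ is an exact statement over $\Z$. So your forward direction (row three) needs no such divisibility fact. For the converse you avoid cofactor identities and count instead. You prove $\rho((\alpha))=1$ by noting that a prime factor of degree $\ge 2$, or two primes of $(\alpha)$ above the same $p$, forces $\operatorname{rank}(M_\alpha \bmod p)\le 1$ and hence $p\mid B_{13}$, which contradicts $(N(\alpha),B_{13})=1$.

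The price is that you rely on the Hensel part of Lemma~\ref{lemma 1} and genuinely use $(N(\alpha),\mathrm{Disc}(f))=1$. By contrast, the paper's congruence, once $B_{13}$ is inverted, already makes $\Z\to\Z[r]/(\alpha)$ surjective. Then $(\alpha)\cap\Z=N(\alpha)\Z$ follows by counting, and the discriminant is not really needed.

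In return, your rank argument explicitly establishes $\rho((\alpha))=1$ (and $\rho(J)=1$). The paper's appeal to the last part of Lemma~\ref{lemma 1} tacitly presupposes this fact without checking it. Your argument therefore also fully justifies the one-line treatment of the ideal $J\supseteq(\alpha)$.
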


\begin{proof}
The starting point is the following trivial observation: $\alpha r^{j}\in(\alpha)$ for all $j=0,1,2,3,\ldots,n-1$. Let $(m_{i,j})_{1\leq i,j\leq n}$ be the entries of $M_{\alpha}$. We obtain the equations
$$m_{1,j}+m_{2,j}r+m_{3,j}r^2=0\bmod{(\alpha)},\;\forall\,1\leq j\leq n.$$

This system can be represented as
$$
\begin{pmatrix}
m_{2,1} & m_{3,1} \\
m_{2,2} & m_{3,2}  \\
m_{2,3} & m_{3,3} 
\end{pmatrix}
\begin{pmatrix}
r \\
r^{2} \\
\end{pmatrix}
=
\begin{pmatrix}
-m_{1,1} \\
-m_{1,2} \\
-m_{1,3}
\end{pmatrix}
\bmod{(\alpha)}
$$

If we remove the $i$-th line in this system and apply Cramer's rule, we find respectively for $i=1,2,3$ the following identities
$$
r\det
\begin{pmatrix}
m_{2,2} & m_{3,2} \\
m_{2,3} & m_{3,3}
\end{pmatrix}
=
\det
\begin{pmatrix}
-m_{1,2} & m_{3,2} \\
-m_{1,3} & m_{3,3} 
\end{pmatrix}
\bmod{(\alpha)},
$$
$$
r\det
\begin{pmatrix}
m_{2,1} & m_{3,1} \\
m_{2,3} & m_{3,3}
\end{pmatrix}
=
\det
\begin{pmatrix}
-m_{1,1} & m_{3,1}  \\
-m_{1,3} & m_{3,3} 
\end{pmatrix}
\bmod{(\alpha)},
$$
$$
r\det
\begin{pmatrix}
m_{2,1} & m_{3,1} \\
m_{2,2} & m_{3,2}
\end{pmatrix}
=
\det
\begin{pmatrix}
-m_{1,1} & m_{3,1}  \\
-m_{1,2} & m_{3,2} 
\end{pmatrix}
\bmod{(\alpha)}.
$$

The transpose of the matrix on the left is the submatrix of $M_{\alpha}$ obtained by removing the first line and the $i^{th}$ column. The matrix on the right is the submatrix of $M_{\alpha}$ obtained by removing the second line and the $i^{th}$ column and by multiplying all elements of the first column by $-1$.

We recall that the $B_{ij}$, $1\leq i,j\leq n$, are the cofactors of $M_{\alpha}$, so that
$$
M^{-1}_{\alpha}=\frac{1}{N(\alpha)}
\begin{pmatrix}
B_{11} & B_{21} & B_{31} \\
B_{12} & B_{22} & B_{32} \\
B_{13} & B_{23} & B_{33}
\end{pmatrix}.
$$

With this notation, the obtained identities becomes
\begin{equation}\label{5.5}
B_{1i}r\equiv B_{2i}\bmod{(\alpha)}.
\end{equation}

By Lemma \ref{lemma 1} (and the assumption $(N(\alpha),\,\mathrm{Disc}\,(P))=1$), if an integer is congruent to $0\bmod{(\alpha)}$ then it is divisible by $N(\alpha)$. Therefore considering $i=3$ now gives the claim of the first part of the present lemma.

For the second part when $J\mid (\alpha)$, thus it suffices to take $k_J\in[0,N(J)]$ such that $k_J\equiv k_{\alpha}\bmod{N(J)}$.
\end{proof}

We end this subsection by observing some connection between the cofactors $B_{1i}$ and $B_{2j}$ with $1\leq i,j\leq n$. Since $(m_{\alpha})^{-1}=m_{\alpha^{-1}}$, we have
$$\alpha^{-1}=\frac{1}{N(\alpha)}(B_{11}+B_{12}r+B_{13}r^2),$$
and the columns of $M_{\alpha}^{-1}$ satisfy the same relations \eqref{M_alpha} as the one in $M_{\alpha}$.

Hence we see that
\begin{equation}\label{B}
\begin{pmatrix}
B_{21} \\
B_{22} \\
B_{23}
\end{pmatrix}
=
\begin{pmatrix}
-c_0B_{13} \\
B_{11}-c_1B_{13} \\
B_{12}-c_2B_{13}
\end{pmatrix}.
\end{equation}

\subsection{Elimination of $a_0$}This is essentially the subsection 5.3 from \cite{DM}.

In preparation to apply Lemma \ref{HB} we approximate the fraction $k_J/N(\alpha)$ by a fraction whose denominator depends only on $a_1,a_2$. A natural way to proceed is to work with resultants of the polynomials defined previously.

\begin{lemma}
There is a homogeneous polynomial $q_0=q_0(a_1,a_2)$ in $a_1,a_2$ such that
$$B_{23}B_{11}-B_{13}B_{21}=q_0N(\alpha).$$
\end{lemma}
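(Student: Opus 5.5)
Using \eqref{B}, we have $B_{21}=-c_0B_{13}$ and $B_{23}=B_{12}-c_2B_{13}$. Substituting,
$$B_{23}B_{11}-B_{13}B_{21}=(B_{12}-c_2B_{13})B_{11}+c_0B_{13}^2 .$$
So the plan is to compute this quadratic expression in the cofactors explicitly and show it is divisible by $N(\alpha)=\det M_\alpha$, with quotient free of $a_0$.

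The cleanest route is to avoid brute expansion. Let $\mathrm{adj}(M_\alpha)=N(\alpha)M_\alpha^{-1}$. By Jacobi's identity for $2\times2$ minors of the adjugate, any $2\times 2$ minor of $\mathrm{adj}(M_\alpha)$ equals $\det M_\alpha$ times a complementary $1\times1$ minor of $M_\alpha$, up to sign. Now $B_{11}B_{23}-B_{13}B_{21}$ is, in the transposed layout of the displayed $M_\alpha^{-1}$, exactly the $2\times2$ minor of the adjugate formed by rows $1,2$ of the cofactor matrix $(B_{ij})$ and columns $1,3$. Jacobi's theorem applied to the cofactor matrix gives
$$\det\begin{pmatrix}B_{11}&B_{13}\\ B_{21}&B_{23}\end{pmatrix}=\pm\det(M_\alpha)\, m_{3,2},$$
where $m_{3,2}$ is the entry of $M_\alpha$ in the complementary position (row $3$, column $2$). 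From \eqref{M_alpha}, $m_{3,2}=a_1-c_2a_2$. This has no $a_0$, and it is homogeneous of degree $1$ in $a_1,a_2$. So we take $q_0=\pm(a_1-c_2a_2)$, with the sign fixed by Jacobi's formula, namely $(-1)^{(1+2)+(1+3)}=-1$, so $q_0=c_2a_2-a_1$.

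The only real obstacle is bookkeeping: matching the index convention (the paper's $B_{ij}$ versus the transposed adjugate) and getting the sign right. I would handle this by a sanity check on a simple element. Take $\alpha=r$, i.e. $a_1=1$ and $a_0=a_2=0$. Then $M_\alpha$ is the companion matrix and $N(\alpha)=-c_0$. Computing $B_{11},B_{13},B_{21},B_{23}$ directly gives $B_{11}=c_1$, $B_{13}=1$, $B_{21}=-c_0$, $B_{23}=-c_2$. The left side is then $-c_1c_2+c_0$, which is not $\pm c_0$, so the naive complementary-entry guess above must be recomputed with the correct index pairing.

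If Jacobi's identity proves awkward to align with the conventions, I fall back on the direct approach. View both sides as polynomials in $a_0$ with coefficients in $\Z[a_1,a_2]$: the $B_{ij}$ have degree $\le2$ in $a_0$, so the left side has degree $\le 4$, while $N(\alpha)$ has degree $3$. Compare leading coefficients and perform polynomial division in $a_0$. Jacobi's theorem guarantees that the remainder is zero, and it forces the quotient to be of degree $1$ in $a_0$. One then checks that its $a_0$-coefficient vanishes, for instance via the test case above.

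Homogeneity follows automatically. Every entry of $M_\alpha$ is linear in $(a_0,a_1,a_2)$, so each $B_{ij}$ is a form of degree $2$, the left side is a form of degree $4$, and $N(\alpha)$ is a form of degree $3$. Hence $q_0$ is a linear form, and once the $a_0$-dependence is shown to vanish it is a form in $a_1,a_2$ alone.
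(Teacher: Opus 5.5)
Your Jacobi argument is correct; keep it, and discard the sanity check that made you doubt it. The complementary-minor theorem, applied to the cofactor matrix $(B_{ij})$ with rows $\{1,2\}$ and columns $\{1,3\}$, gives
\[
B_{11}B_{23}-B_{13}B_{21}=(-1)^{(1+2)+(1+3)}\det(M_\alpha)\,m_{3,2}=(c_2a_2-a_1)\,N(\alpha).
\]
This is exactly the $q_0$ given in the paper's remark after this lemma. For $3\times 3$ matrices this is just the polynomial identity $\mathrm{adj}(\mathrm{adj}\,M)=(\det M)\,M$, so it holds for all $(a_0,a_1,a_2)$ with no hypotheses.

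Your sanity check contains an arithmetic slip. For $\alpha=r$ one has $B_{23}=0$, not $-c_2$: deleting row $2$ and column $3$ of the companion matrix leaves $\left(\begin{smallmatrix}0&0\\0&1\end{smallmatrix}\right)$, and equivalently $B_{23}=B_{12}-c_2B_{13}=c_2-c_2$. So the left side equals $0\cdot c_1-1\cdot(-c_0)=c_0=(c_2\cdot 0-1)(-c_0)$, which confirms your formula rather than refuting it.

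The fallback should be dropped, for three reasons:
\begin{itemize}
\item It still relies on Jacobi to get the zero remainder.
\item The bound ``degree $\le 4$ in $a_0$'' does not force an $a_0$-free quotient. You would need that off-diagonal cofactors are linear in $a_0$, so the left side has degree $\le 3$ while $N(\alpha)$ is monic cubic in $a_0$.
\item A test point with $a_0=0$ cannot detect an $a_0$-coefficient.
\end{itemize}

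The paper argues arithmetically instead. It combines $B_{11}r\equiv B_{21}$ and $B_{13}r\equiv B_{23}\bmod(\alpha)$ to get $(\alpha)\mid B_{11}B_{23}-B_{13}B_{21}$. It then uses Lemma 1 to upgrade this to divisibility by $N(\alpha)$ for integer triples with $(N(\alpha),\mathrm{Disc}(f))=1$. A Zariski-density argument turns this into a polynomial identity, and a comparison of degrees in $a_0$ removes $a_0$ from $q_0$.

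Your route is purely algebraic and buys three things. It needs no coprimality conditions. It avoids the density step, which the paper only sketches: divisibility of values at a dense set of integer points needs an extra argument to give divisibility of polynomials. And it produces $q_0=-m_{3,2}$ explicitly, with the $a_0$-independence read off directly from $m_{3,2}=a_1-c_2a_2$. The paper's route has the advantage of reusing the congruences $B_{1i}r\equiv B_{2i}\bmod(\alpha)$, which it needs anyway for computing $k_\alpha$.
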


\begin{proof}
We note that the argument giving \eqref{5.5} holds for any $\alpha\neq 0$. Applying this with $i=1,3$ we find$$
rB_{11}B_{23}\equiv rB_{21}B_{13}\bmod{N(\alpha)}.$$

Since this holds for all $a_0,a_1,a_2$ with $(N(\alpha),\mathrm{Disc}\,(P))=1$, we deduce that there exists a form $q_0=q_0(a_0,a_1,a_2)$ such that for $a_0,a_1,a_2$ satisfying $(N(\alpha),\mathrm{Disc}\,(P))=1$ we have
$$B_{23}B_{11}-B_{13}B_{21}=q_0N(\alpha).$$

Since both sides are polynomials and the set of triples $(a_0,a_1,a_2)\in\Z^3$ satisfying the coprimality condition is Zariski dense in $\mathbb{A}_\Q^3$, this identity must actually hold for all $\alpha$ including $(N(\alpha),\mathrm{Disc}\,(P))\neq 1$. Therefore we just need to show that $q_0$ actually doesn't depend on $a_0$. The polynomial $N(\alpha)$ has degree 3 in $a_0$ while the polynomials $B_{ij}$ are of degree 1 if $i\neq j$ in $a_0$ and of degree 2 if $i=j$, and so by equating the coefficients of $a_0^3$ we see that $q_0$ must not depend on $a_0$.
\end{proof}

\begin{remark}
One can explicitly compute $q_0$ in terms of the coefficients $c_{i}$ of $f$; it is given by
$$q_0(a_1,a_2)=a_2c_2 - a_1.$$
\end{remark}

Following the notation of \cite{breteche} and \cite{dartyge}, we write $\mathrm{Resultant}(P_1,P_2;x)$ for the resultant of the polynomials $P_1$, $P_2$ with respect to the variable $x$. We will be interested by the two following resultants
$$R:=R(a_1,a_2)=\mathrm{Resultant}(B_{13},N(\alpha);a_0),$$
$$R_0:=R_0(a_1,a_2)=\mathrm{Resultant}(B_{13},B_{22};a_0).$$

The following lemma is an analog of lemma 5.4 from \cite{DM}. While we could not find a conceptual proof, one may check this identity holds by direct calculation using SAGE.
\begin{lemma}
With the previous notation we have
$$-q_0^{2}R=R_0^{2}.$$
\end{lemma}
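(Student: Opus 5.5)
We need to prove $-q_0^2R=R_0^2$, where $R=\mathrm{Res}(B_{13},N(\alpha);a_0)$ and $R_0=\mathrm{Res}(B_{13},B_{22};a_0)$.

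The first step is to read off degrees in $a_0$ from \eqref{M_alpha}. The cofactor $B_{13}$ is the minor with row $1$ and column $3$ deleted:
$$B_{13}=a_1(a_1-c_2a_2)-a_2(a_0-c_1a_2).$$
So $B_{13}$ is linear in $a_0$ with leading coefficient $-a_2$, and its unique root is
$$a_0^*=\frac{a_1^2-c_2a_1a_2+c_1a_2^2}{a_2}.$$
Since $B_{13}$ is linear, both resultants collapse to evaluations:
$$R=(-a_2)^3N(\alpha)\big|_{a_0=a_0^*},\qquad R_0=(-a_2)^2B_{22}\big|_{a_0=a_0^*},$$
up to the fixed sign convention for resultants, which I will pin down once and apply consistently.

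Next I use the adjugate identity at $a_0=a_0^*$, where $B_{13}=0$. The identity from the previous lemma, $B_{23}B_{11}-B_{13}B_{21}=q_0N(\alpha)$, reduces there to
$$B_{23}B_{11}=q_0N(\alpha).$$
Relation \eqref{B} also simplifies when $B_{13}=0$: it gives $B_{22}=B_{11}$ and $B_{23}=B_{12}$. Hence at $a_0^*$,
$$q_0N=B_{11}B_{12}.$$

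I then need one more relation tying $B_{12}$ to $B_{11}$ on $B_{13}=0$. The element $\beta=B_{11}+B_{12}r+B_{13}r^2$ equals $N(\alpha)\alpha^{-1}$. Its norm is therefore $N(\beta)=N(\alpha)^2$, and when $B_{13}=0$ this says
$$N(B_{11}+B_{12}r)=N(\alpha)^2.$$
Also $B_{11}+B_{12}r=-B_{12}(r_0-r)$ with $r_0=-B_{11}/B_{12}$, so
$$N(B_{11}+B_{12}r)=\pm B_{12}^3f(r_0).$$
Combining these with $q_0N=B_{11}B_{12}$ should give
$$q_0^2N^2=B_{11}^2B_{12}^2.$$
The rank-$2$ structure of the adjugate at $B_{13}=0$ should force
$$-q_0^2N\,a_2=B_{11}^2\cdot(\text{power of }a_2)$$
after clearing denominators. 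Multiplying through by the appropriate powers of $a_2$ then yields $-q_0^2R=R_0^2$ as polynomials in $a_1,a_2$. Both sides have degree $2+9=11$? I will redo this degree check: $R$ has degree $3\cdot1+\ldots$. The degree counts must match as homogeneous forms, and I will use this as a consistency check.

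The main obstacle is that the abstract manipulation above gives $q_0N=B_{11}^2/(\ldots)$ only up to unclear factors. I therefore expect to fall back on a direct computation. With $q_0=a_2c_2-a_1$ known explicitly, substituting $a_0^*$ into $N(\alpha)=\det M_\alpha$ and into $B_{22}$ is a finite polynomial identity in $a_1,a_2,c_0,c_1,c_2$. I would verify it symbolically, as the paper suggests with SAGE. To make it hand-checkable, I would set $a_2=1$ by homogeneity and reduce to a one-variable identity in $a_1$, then compare coefficients.
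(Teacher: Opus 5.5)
Your plan is correct and essentially the paper's: your relation $q_0N(\alpha)=B_{11}B_{12}$ at $a_0=a_0^*$ is the evaluated form of the paper's reduction $q_0R=R_0\,\mathrm{Resultant}(B_{13},B_{12};a_0)$, and, like the paper, you settle the remaining relation $a_2B_{11}=q_0B_{12}$ on $B_{13}=0$ by direct symbolic computation. As a side remark, your heuristic target should read $q_0^2N(\alpha)=a_2B_{11}^2$ at $a_0^*$ (no minus sign; both sides of the lemma are forms of degree $8$ in $a_1,a_2$), and the missing relation in fact follows by hand from your $\beta=B_{12}(r-r_0)$: writing $\alpha=N(\alpha)\beta^{-1}=-\frac{N(\alpha)}{B_{12}f(r_0)}\bigl(r^2+(r_0+c_2)r+r_0^2+c_2r_0+c_1\bigr)$ and comparing the coefficients of $r^2$ and $r$ gives $a_2r_0=a_1-c_2a_2=-q_0$, i.e.\ $a_2B_{11}=q_0B_{12}$, whence $-q_0^2R=a_2^3q_0^2N(\alpha)=a_2^4B_{11}^2=R_0^2$.
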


\begin{remark}Here we try to repeat the proofs of analogous lemmas from \cite{DM} and \cite{breteche}.

Since $B_{13}$ is of degree $1$ in $a_0$, we have
$$q_0 R=\mathrm{Resultant}(B_{13},q_0N(\alpha);a_0)=\mathrm{Resultant}(B_{13},B_{23}B_{11}-B_{13}B_{21};a_0).$$
But by \eqref{B} we have $B_{11}=B_{22}+c_1B_{13}$, $B_{23}=B_{12}-c_2B_{13}$, so
$$q_0R=\mathrm{Resultant}(B_{13},B_{12}B_{22}+B_{13}(-B_{21}+c_1B_{23}-c_2B_{22});a_0)=$$$$\mathrm{Resultant}(B_{13},B_{12}B_{22})=R_0 \cdot\mathrm{Resultant}(B_{13},B_{12}).$$
So we need to prove
$$-q_0\mathrm{Resultant}(B_{13},B_{12};a_0)=\mathrm{Resultant}(B_{13},B_{22};a_0)=R_0,$$
which we can not do by algebra, but it can be done by direct computation.
\end{remark}

We see that the polynomial $q_0$ divides $R_0$, and so we can write
$$R_0=qq_0$$
for some homogeneous polynomial $q=q(a_1,a_2)$. Moreover, since $R_0$ is the resultant of $B_{13}$ and $B_{22}$, there are two polynomials $U$ and $V\in\Z[a_0,a_1,a_2]$ such that
$$UB_{22}+VB_{13}=qq_0.$$

We are now ready to state the main result of this section.

\begin{lemma}\label{kalpha}
Suppose $a_0,a_1,a_2$ are such that $(B_{13}(a_0,a_1,a_2),q(a_1,a_2))=1$. Then $(N(\alpha),B_{13}(a_0,a_1,a_2))=1$ and we have
$$e\Big(\frac{k_{\alpha}}{N(\alpha)}\Big)=e\Big(\frac{U(a_0,a_1,a_2)\overline{B}_{13}(a_0,a_1,a_2)}{q(a_1,a_2)}+E(a_0,a_1,a_2)\Big),$$
where $E$ is given by
$$E(a_0,a_1,a_2)=-\frac{U}{qB_{13}}+\frac{B_{23}}{N(\alpha)B_{13}}.$$
\end{lemma}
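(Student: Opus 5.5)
The argument has two parts: first the coprimality claim, then the exponential identity, which comes from the congruence of Lemma \ref{5.2} together with the reciprocity relation $\overline{a}/b+\overline{b}/a\equiv 1/(ab)\pmod 1$.

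\textbf{Coprimality.} Suppose a prime $p$ divides both $N(\alpha)$ and $B_{13}$. By the identity $B_{23}B_{11}-B_{13}B_{21}=q_0N(\alpha)$ we get $p\mid B_{23}B_{11}$. A more direct route is the relation $UB_{22}+VB_{13}=qq_0$. From \eqref{B}, $B_{22}=B_{11}-c_1B_{13}\equiv B_{11}\pmod p$. Reducing $M_\alpha M_\alpha^{-1}$ modulo $p$, the adjugate satisfies $M_\alpha\,\mathrm{adj}(M_\alpha)=N(\alpha)I\equiv 0$. Since $B_{13}\equiv0$, the first row of the adjugate is $(B_{11},B_{21},0)$ with $B_{21}=-c_0B_{13}\equiv 0$. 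So it is $(B_{11},0,0)$, and the equation $M_\alpha\,\mathrm{adj}\equiv0$ forces $B_{11}a_0\equiv B_{11}a_1\equiv B_{11}a_2\equiv0$.

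If $p\nmid B_{11}$, then $p\mid\alpha$, and hence $p^2\mid B_{13}$ and $p^3\mid N(\alpha)$. In that case $p\mid B_{22}$, and the resultant relation then gives $p\mid qq_0$. Otherwise $p\mid B_{11}$, hence $p\mid B_{22}$, and again $p\mid qq_0$. Either way $p\mid qq_0$. Since $(B_{13},q)=1$, we must have $p\mid q_0$ and $p\nmid q$.

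To exclude this last case I would use $-q_0^2R=R_0^2=q^2q_0^2$, which gives $R=-q^2$. As $B_{13}$ is linear in $a_0$ with leading coefficient a form $\ell(a_1,a_2)$, the resultant $R$ equals, up to a power of $\ell$, $N(\alpha)$ evaluated at the root $a_0$ of $B_{13}$. Hence $p\mid B_{13}$ and $p\mid N(\alpha)$ force $p\mid R=-q^2$ whenever $p\nmid\ell$. This contradicts $(B_{13},q)=1$, and this resultant argument is really the cleaner way to prove the whole coprimality claim. The case $p\mid\ell$ must be handled separately. I would argue that $p\mid \ell$ and $p\mid B_{13}$ give $p\mid B_{13}(0,a_1,a_2)$ as well, and then check from the explicit cofactor formulas that this still yields $p\mid R$. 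I expect this degenerate-leading-coefficient case to be the main obstacle; if it resists, one can fall back on the explicit SAGE-computed forms.

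\textbf{The exponential identity.} Once $(N(\alpha),B_{13})=1$ is known, Lemma \ref{5.2} gives $k_\alpha\equiv B_{23}\overline{B_{13}}\pmod{N(\alpha)}$; we also need $(N(\alpha),\mathrm{Disc}f)=1$, which I would take as implicit in the setting. Reciprocity then gives
$$\frac{B_{23}\overline{B_{13}}}{N(\alpha)}\equiv -\frac{B_{23}\overline{N(\alpha)}}{B_{13}}+\frac{B_{23}}{N(\alpha)B_{13}}\pmod 1,$$
where $\overline{N(\alpha)}$ is the inverse modulo $B_{13}$.

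Next, modulo $B_{13}$ we have $B_{23}B_{11}\equiv q_0N(\alpha)$, so $B_{23}\overline{N(\alpha)}\equiv q_0\overline{B_{11}}$. Moreover $B_{11}\equiv B_{22}$ and $UB_{22}\equiv qq_0$. Since $(B_{13},q)=1$ and $N(\alpha)$ is invertible, $B_{11}$ is invertible modulo $B_{13}$, hence so is $q_0$. Therefore $q_0\overline{B_{11}}\equiv U\overline{q}$, and
$$-\frac{B_{23}\overline{N(\alpha)}}{B_{13}}\equiv -\frac{U\overline{q}}{B_{13}}\equiv \frac{U\overline{B_{13}}}{q}-\frac{U}{qB_{13}}\pmod 1,$$
using reciprocity once more for the coprime pair $(q,B_{13})$. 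Summing the two displays yields exactly $U\overline{B_{13}}/q+E$ with the stated $E$. One must be careful with signs and with negative moduli; I would normalise to $|q|$ and $|B_{13}|$ at this point.
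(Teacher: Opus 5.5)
Your skeleton (coprimality from $R=-q^2$, Lemma~\ref{5.2}, reciprocity twice, and reduction to $UN(\alpha)\equiv qB_{23}\pmod{B_{13}}$) is the paper's. However, your derivation of that last congruence has a genuine gap.

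You claim that $(B_{13},q)=1$, together with the invertibility of $N(\alpha)$, makes $B_{11}$, and hence $q_0$, invertible modulo $B_{13}$. Your relations cannot give this. If $p\mid B_{13}$ and $p\mid B_{11}$, then $UB_{22}+VB_{13}=qq_0$ only yields $p\mid q_0$, and $B_{23}B_{11}-B_{13}B_{21}=q_0N(\alpha)$ then holds trivially modulo $p$.

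The claim is in fact false. Take $f=X^3+2$ and $(a_0,a_1,a_2)=(3,3,1)$. Then $B_{13}=6$, $q=-29$ and $N(\alpha)=31$, so the hypothesis holds, yet $B_{11}=15$ and $q_0=-3$ are both divisible by $3$. More generally, any prime $p\nmid c_0a_2$ with $a_1\equiv c_2a_2$ and $a_0\equiv c_1a_2 \pmod p$ divides $B_{13}$ and $q_0$ but not $qN(\alpha)$. In such cases your chain $B_{23}\overline{N(\alpha)}\equiv q_0\overline{B_{11}}\equiv U\overline{q}$ is undefined. The lemma itself still holds there: in the example $k_\alpha=24$ and both sides equal $e(24/31)$.

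The missing idea is to cancel $q_0$ at the level of polynomials, before specialising. Combining $q_0N(\alpha)=B_{23}B_{11}-B_{13}B_{21}$, $B_{11}=B_{22}+c_1B_{13}$ and $UB_{22}=qq_0-VB_{13}$ gives the identity
\[
q_0\bigl(UN(\alpha)-qB_{23}\bigr)=B_{13}\bigl(U(c_1B_{23}-B_{21})-VB_{23}\bigr)
\]
in $\Z[a_0,a_1,a_2]$. The polynomial $B_{13}=-a_2a_0+(a_1^2-c_2a_1a_2+c_1a_2^2)$ has degree one in $a_0$ with coprime coefficients, so it is irreducible, hence prime, in this UFD, and it does not divide $q_0$. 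Therefore $UN(\alpha)-qB_{23}=B_{13}W$ with $W\in\Z[a_0,a_1,a_2]$, and evaluating gives the congruence for every integer triple. This is how the paper's remark that $q_0$ and $B_{13}$ are ``coprime as polynomials'' has to be read.

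On the coprimality claim, your resultant argument is the paper's. The case $p\mid\ell$ that you flag as the main obstacle is immediate: $\ell=-a_2$, so $p\mid a_2$ and $p\mid B_{13}$ force $p\mid a_1^2$, whence $p\mid q$ because $q$ is a binary cubic form. Alternatively, $R$ lies in the ideal of $\Z[a_0,a_1,a_2]$ generated by $B_{13}$ and $N(\alpha)$, so any common divisor of their values divides $q^2$, with no case split. Either way, the adjugate detour can be dropped.
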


\begin{proof}
To simplify notation, for the proof let $q,q_0,U,B_{14},B_{14},B_{23},B_{24},N(\alpha)$ denote the values of the polynomials evaluated at $a_0,a_1,a_2$.

Since $R=-q^2$, if $q$ is coprime with $B_{13}$, we have $(N(\alpha),B_{13})=1$. 

\noindent By Lemma \ref{5.2},
$$
e\Big(\frac{k_{\alpha}}{N(\alpha)}\Big)=e\Big(\frac{B_{23}\overline{B}_{13}}{N(\alpha)}\Big).
$$

We use the Bezout relation
$$
\frac{\overline{u}}{v}+\frac{\overline{v}}{u}\equiv\frac{1}{uv}\bmod{1}\quad\text{for }(u,v)=1,
$$
and the fact that $(N(\alpha),B_{13})=1$. This yields the formula
$$
e\Big(\frac{k_{\alpha}}{N(\alpha)}\Big)=e\Big(-\frac{B_{23}\overline{N(\alpha)}}{B_{13}}+\frac{B_{23}}{B_{13}N(\alpha)}\Big).
$$

Also we obtain
$$UN(\alpha)q_0=U[B_{23}(B_{22}+c_1B_{13})-B_{13}B_{21}]=UB_{13}(c_1B_{23}-B_{21})+UB_{22}B_{23}=$$$$UB_{13}(c_1B_{23}-B_{21})+B_{23}(R_0-VB_{13})=UB_{13}(c_1B_{23}-B_{21})-VB_{13}B_{23}+B_{23}qq_0.$$

Then, as $q_0$ and $B_{13}$ are coprime as polynomials, we have the implication 
$$\congruence{q_0(UN(\alpha)-qB_{23})}{0}{B_{13}}\Rightarrow \congruence{UN(\alpha)}{qB_{23}}{B_{13}}.$$

Using this and Bezout once more we deduce
$$e\Big(\frac{k_{\alpha}}{N(\alpha)}\Big)= e\Big(-\frac{U\overline q}{B_{13}}+\frac{B_{23}}{B_{13}N(\alpha)}\Big)=e\Big(\frac{U\overline B_{13}}{q}-\frac{U}{qB_{13}}+\frac{B_{23}}{B_{13}N(\alpha)}\Big).$$
\end{proof}

There are explicit expression for important polynomials from this subsection.
$$q=a_2^3c_1c_2 - a_1a_2^2c_2^2 - a_2^3c_0 - a_1a_2^2c_1 + 2a_1^2a_2c_2 - a_1^3.$$
$$q_0=a_2c_2 - a_1.$$
$$U=a_2^2.$$
$$V= a_2\cdot a_0+(a_2^2c_2^2 - 2a_1a_2c_2 + a_1^2).$$
$$B_{13}=- a_2\cdot a_0 + (a_2^2c_1 - a_1a_2c_2 + a_1^2). $$
$$B_{22}=a_0^2 + a_0(-a_2c_1 - a_1c_2) +(-a_2^2c_0c_2 + a_0a_2c_2^2 + a_1a_2c_0).$$

\begin{remark}
    For $f=X^3+2$ we obtain $q=-a_1^3-2a_2^3$, while Heath-Brown in \cite{HB} has an analogous polynomial equal to $a_0^3-2a_1^3$ due to different choice of indices and eliminating $a_2$ in the denominator.
\end{remark}

\subsection{Fundamental domain for the action of the group of units of $\Q(r)$.}\label{fd}
Let $r_1$ and $2r_2$ be numbers of real and complex embeddings of $\Q(r)$ into $\C$ respectively.

Due to Dirichlet unit theorem there are two cases of how the unit group $E$ looks like:
\begin{enumerate}
    \item $r_1=r_2=1$ and $E=E_0\times \langle w\rangle$ with $w>1$, where $E_0$ is the group of roots of unity in $\Q(r)$. Here we simply put $\mathcal{D}$ to be the set of the $\alpha$ with
    $$1\le \frac{|\alpha|}{|N(\alpha)|^{1/3}}\le |w|.$$
    
    \item $r_1=3,\;r_2=0$ and $E=\{\pm1\}\times \langle w_1,w_2\rangle$ with $1<w_1<w_2$. In this group, the totally positive units form a subgroup of index at most $2^{\#\text{S}_3}=2^6$. Denote as $w_1^+, w_2^+$ its generators.

    Now we define $\mathcal{D}$ as $\mathcal{D}=\mathcal{D}_1\cup\mathcal{D}_2$ with $\mathcal{D}_1,\mathcal{D}_2$ being the open cones generated over $\R_+$ by $\{1,w_1^+,w_1^+w_2^+\}$ and $\{1,w_2^+,w_1^+w_2^+\}$ respectively.
\end{enumerate}

In the first case it is easy to see that $\mathcal{D}$ is $\#E_0$ copies of the fundamental domain.

In the second case due to \cite{colmez} and \cite{diaz} $\mathcal{D}$ is also union of a finite number $d$ of copies of the fundamental domain.

\begin{lemma}\label{D}
    Let $\alpha\in\Z[r]$. Then there are finitely many $\alpha'\in\mathcal{D}$ such that $(\alpha)=(\alpha')$ as ideals.
\end{lemma}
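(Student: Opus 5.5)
If $(\alpha)=(\alpha')$, then $\alpha'=\varepsilon\alpha$ for some unit $\varepsilon\in E$. So the statement reduces to showing that only finitely many units $\varepsilon$ satisfy $\varepsilon\alpha\in\mathcal{D}$. We do this separately in the two cases of Subsection~\ref{fd}.

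\emph{Case $r_1=r_2=1$.} Here we take $|\cdot|$ to be the real embedding, so that $\alpha\mapsto|\alpha|$ is multiplicative. Write $\varepsilon=\zeta w^k$ with $\zeta\in E_0$ and $k\in\Z$. Units have norm $\pm1$, so $|N(\varepsilon\alpha)|=|N(\alpha)|$. The defining condition of $\mathcal{D}$ then reads
$$1\le |w|^k\frac{|\alpha|}{|N(\alpha)|^{1/3}}\le |w|.$$
Since $|w|>1$, this holds for at most two values of $k$. Combined with the $\#E_0$ (finite) choices of $\zeta$, there are finitely many $\alpha'$.

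\emph{Case $r_1=3$.} Let $\sigma_1,\sigma_2,\sigma_3$ be the real embeddings and use the logarithmic map $\ell(\beta)=(\log|\sigma_i(\beta)|)_i$. An element of the open cone $\mathcal{D}_j$ is a positive combination of totally positive elements, so it is totally positive. Hence $\varepsilon\alpha\in\mathcal{D}$ forces the signs of $\sigma_i(\varepsilon)$ to equal those of $\sigma_i(\alpha)$. This fixes the coset of $\varepsilon$ modulo the totally positive units, which has finite index. Within that coset, write $\varepsilon=\varepsilon_0 (w_1^+)^{a}(w_2^+)^{b}$.

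We need $\ell(\varepsilon\alpha)$ to lie in a bounded region modulo the diagonal direction. To see this, note that for $\beta=t_1+t_2u+t_3v\in\mathcal{D}_1$ with $t_i>0$ and $\{1,u,v\}=\{1,w_1^+,w_1^+w_2^+\}$, each $\sigma_i(\beta)$ is a positive combination of the same three vectors, whose coordinates are positive. Therefore
$$\sigma_i(\beta)\asymp \max(t_1,t_2,t_3)\quad\text{up to constants depending only on the generators.}$$
Thus $\sigma_i(\beta)/N(\beta)^{1/3}$ is bounded above and below by constants depending only on $\mathcal{D}$, and similarly on $\mathcal{D}_2$. So $\ell(\varepsilon\alpha)-\tfrac13\log|N(\alpha)|(1,1,1)$ lies in a fixed compact set $C$. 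Equivalently, $\ell(\varepsilon)$ lies in the translate
$$C-\ell(\alpha)+\tfrac13\log|N(\alpha)|(1,1,1),$$
intersected with the trace-zero hyperplane. By Dirichlet's theorem, $\ell(E)$ is a lattice in that hyperplane, so only finitely many $(a,b)$ qualify. The kernel of $\ell$ on $E$ is $\{\pm1\}$, which is finite.

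The main obstacle is this boundedness of $\ell$ on $\mathcal{D}$ modulo the diagonal. Positivity of the cone generators makes it a direct comparison, so I expect it to go through without invoking \cite{colmez} or \cite{diaz}.
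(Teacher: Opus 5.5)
Your proof is correct. In the case $r_1=r_2=1$ it is the paper's argument, done more carefully. The paper asserts $1/w<|\theta|<w$ and concludes $\theta\in E_0$, but the inequalities defining $\mathcal{D}$ are not strict, so two exponents $k$ can occur on the boundary (for instance $1$ and $w$ both lie in $\mathcal{D}$ and generate the same ideal). Your ``at most two values of $k$'' is the correct statement.

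In the case $r_1=3$ the paper gives no argument and defers to Colmez~\cite{colmez}, i.e.\ to the theorem that $\mathcal{D}_1\cup\mathcal{D}_2$ is a union of finitely many copies of a fundamental domain for the totally positive units. You instead prove finiteness directly in three steps:
\begin{itemize}
\item total positivity of $\mathcal{D}$ pins $\varepsilon$ to one coset of $E^+$;
\item the two-sided estimate $\sigma_i(\beta)\asymp\max_j t_j\asymp N(\beta)^{1/3}$ on the cones, a two-sided version of the positivity argument the paper uses in Lemma~\ref{norma}, confines $\ell(\varepsilon)$ to a bounded set;
\item discreteness of $\ell(E)$ finishes.
\end{itemize}
This is elementary and uses nothing about how the two cones fit together, so it works for any choice of $w_1^+,w_2^+$. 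It even bounds the number of representatives uniformly in $\alpha$.

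What the citation provides and your argument does not is the covering half: every principal ideal admitting a totally positive generator has one in $\mathcal{D}$, indeed a fixed number $d$ of them. The paper relies on this stronger fact, not mere finiteness, when it rewrites $\sum_{\alpha\in\mathcal{D}}N(\alpha)^{-1}$ as $d$ times a sum over principal ideals in the treatment of $S_0$. For the lemma as stated, however, your argument is complete.
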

\begin{proof}
    While the second case $\Q(r)\subset\R$ is described in \cite{colmez}, the first one is simple. We can assume $\alpha\in\mathcal{D}$. Also we have $|N(\alpha)|=|N(\alpha')|$, therefore there is a unit $\theta  $ such that $\alpha'=\theta\alpha$. The definition of $\mathcal{D}$ implies $1/w<|\theta|<w$ and consequently $N(\theta)=1$, $|\theta|=1$ and $\theta\in E_0$. 
\end{proof}

\begin{lemma}\label{norma}
    Let $\alpha=a_0+a_1r+a_2r\in\mathcal{D}\cap\Z[r]$, then $N(\alpha)>0$ and
    $$\max\{|a_0|,|a_1|,|a_2|\}\ll N(\alpha)^{1/3}.$$
\end{lemma}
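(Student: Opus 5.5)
The plan is to pass to the real and complex embeddings of $K=\Q(r)$ and bound the coordinates $a_0,a_1,a_2$ by the sizes of the conjugates of $\alpha$. Let $\sigma_1,\sigma_2,\sigma_3$ be the three embeddings and write $\alpha_j=\sigma_j(\alpha)=a_0+a_1r_j+a_2r_j^2$, where $r_1,r_2,r_3$ are the roots of $f$. The Vandermonde matrix $V=(r_j^{i})$ is invertible because $f$ is irreducible, so its roots are distinct. Hence $(a_0,a_1,a_2)^T=V^{-1}(\alpha_1,\alpha_2,\alpha_3)^T$, which gives
$$\max_i|a_i|\ll_f\max_j|\alpha_j|.$$
It therefore suffices to show that $|\alpha_j|\ll N(\alpha)^{1/3}$ for every $j$ when $\alpha\in\mathcal{D}$, and that $N(\alpha)>0$.

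\emph{Case $r_1=r_2=1$.} Here $\alpha_1\in\R$ and $\alpha_2=\overline{\alpha_3}$, so $N(\alpha)=\alpha_1|\alpha_2|^2$. I read $|\alpha|$ in the definition of $\mathcal{D}$ as the absolute value at the real embedding. Positivity of $N(\alpha)$ then needs $\alpha_1>0$. I would either absorb a sign into the definition, since $-1\in E_0$ and the domain is defined only up to the finite group $E_0$, or interpret $\mathcal{D}$ as having positive real embedding. This is a matter of normalisation rather than an obstacle. The defining inequality of $\mathcal{D}$ gives $|N(\alpha)|^{1/3}\le|\alpha_1|\le w|N(\alpha)|^{1/3}$. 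Then
$$|\alpha_2|^2=|N(\alpha)|/|\alpha_1|\le|N(\alpha)|^{2/3},$$
so every conjugate is $\ll_f N(\alpha)^{1/3}$, with constant depending on $w$.

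\emph{Case $r_1=3$, $r_2=0$.} Here $\mathcal{D}$ is a union of two open simplicial cones, each spanned over $\R_+$ by three totally positive units, for example $u_1=1$, $u_2=w_1^+$, $u_3=w_1^+w_2^+$. Any $\alpha$ in such a cone equals $t_1u_1+t_2u_2+t_3u_3$ with $t_i>0$. Each conjugate $\alpha_j=\sum_i t_i\sigma_j(u_i)$ is then positive, so $N(\alpha)>0$.

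For the upper bound, set $T=\max_i t_i$. Since all $\sigma_j(u_i)$ are positive constants, we get $c_1T\le\alpha_j\le c_2T$ for every $j$, with $c_1,c_2>0$ depending only on $f$. Thus the conjugates are all of the same order, and
$$N(\alpha)=\alpha_1\alpha_2\alpha_3\asymp T^3,$$
so each $\alpha_j\ll T\ll N(\alpha)^{1/3}$. The only point needing care is the lower bound $\alpha_j\ge c_1T$, which follows because every coefficient $\sigma_j(u_i)$ is strictly positive and the $t_i$ are positive.

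Combining either case with the Vandermonde inversion gives the lemma. The main (mild) obstacle is in the first case: the conjugates are not controlled by a cone structure, so the sign convention for $N(\alpha)>0$ must be fixed, and the two complex conjugates must be recovered from the norm together with the size of the real conjugate.
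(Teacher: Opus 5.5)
Your proof is correct and follows the paper's argument. In the mixed-signature case you use $N(\alpha)=\alpha_1|\alpha_2|^2$ with the defining inequality of $\mathcal{D}$ to bound every conjugate by $\ll N(\alpha)^{1/3}$; in the totally real case you bound the cone coordinates using positivity of the conjugates (the paper expands the product to get $N(\alpha)\ge\sum x_i^3$ where you use $\alpha_j\ge c_1T$); in both cases you then pass to $a_0,a_1,a_2$ by a fixed linear change of coordinates, which your Vandermonde inversion makes explicit. Your remark that $N(\alpha)>0$ in the first case needs a sign normalisation of the real embedding is well taken: $\mathcal{D}$ as defined is stable under $\alpha\mapsto-\alpha$ while $N(-\alpha)=-N(\alpha)$, a point the paper's proof passes over silently.
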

\begin{proof}
    In the case (1) there is $\tau\in\text{Gal}(\Q(r)/\Q)$ such that $N(\alpha)=\alpha|\tau(\alpha)^2|$, while the condition on $\mathcal{D}$ implies $|\tau(\alpha)|\le|\alpha|$ and thus
    $$\max\{|\alpha|,|\tau(\alpha)|\}\le |w|N(\alpha)^{1/3}.$$
    
    The conclusion follows from equivalence of the usual norm on $\R^3$ and the norm $\max\{|\alpha|,|\tau(\alpha)|\}$.

    In the case (2) we note that $\text{Gal}(\Q(r)/\Q)=\Z/3\Z$. Now we consider only $\alpha\in\mathcal{D}_1$ as proof for $\mathcal{D}_2$ is the same. We have 
    $$\alpha=x_1\cdot1+x_2\cdot w_1^++x_3\cdot w_1^+w_2^+$$ with integers $x_i\ge0$ and
    $$N(\alpha)=\prod_{\tau\in\text{Gal}(\Q(r)/\Q)}(x_1\tau(1)+x_2\tau(w_1^+)+x_3\tau(w_1^+w_2^+))\ge\sum_{i=1}^3 x_i^3$$
    after developing the product and observing that $\tau(w_i^+)>0$ and $N(w_i^+)=1$.

    Then $\max x_i\le N(\alpha)^{1/3}$ and due to equivalence of the norms we obtain $\max\{a_0,a_1,a_2\}\ll N(\alpha)^{1/3}.$
\end{proof}

\subsection{A q-Van der Corput estimate for short exponential sums}

Here we state the key lemma for estimating $S_1$, this is the theorem 2 from \cite{HB}.

\begin{lemma}\label{HB}
Let $k \geq 1$, $D \geq 1$ and $\epsilon > 0$. Let $f, g \in \Z[X]$ polynomials of degree $\leq D$ and $q = q_0 \cdots q_k$ a squarefree integer with no prime factors $\le 2^k D$. Suppose for every $p \mid q$, there is no polynomial $w \in \Z[X]$ of degree $\le k + 1$ for which $f(X) \equiv w(X)g(X) \bmod{p}$ or $v(X) \equiv 0 \bmod{p}$ holds. Then, for $A, B, h \geq 1$, we have
$$
\sum_{\substack{A<n\leq A+B \\ (g(n),q)=1}} e\left(\frac{hf(n)\overline{g}(n)}{q}\right)
$$$$\ll_{k,D,\epsilon} q^{\epsilon}B\left(\left(\frac{\Delta}{q_0}\right)^{1/2^{k+1}} + \left(\frac{q_0}{\Delta B^2}\right)^{1/2^{k+1}} + \sum_{j=1}^{k}\left(\frac{q_{k+1-j}}{B}\right)^{1/2^j}\right)
$$
with $\Delta = (q_0, h)$.
\end{lemma}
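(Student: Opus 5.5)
The plan is to prove Lemma~\ref{HB} by induction on $k$, following the $q$-analogue of van der Corput's method. The base case is completion plus the Weil--Bombieri bound for complete sums of rational functions. The inductive step is a single $q$-van der Corput differencing step that peels off the factor $q_k$.

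\emph{Base case.} Take $q=q_0$, with the non-degeneracy hypothesis on $f\bar g$ modulo each $p\mid q_0$.
\begin{itemize}
\item Complete the sum: detect $A<n\le A+B$ by additive characters modulo $q_0$. The sum becomes $q_0^{-1}\sum_{u \bmod q_0}$ of a geometric sum of size $\ll\min(B,\|u/q_0\|^{-1})$ times the complete sum
$$\sum_{x \bmod q_0,\ (g(x),q_0)=1} e_{q_0}\bigl(hf(x)\bar g(x)+ux\bigr).$$
\item By the Chinese remainder theorem the complete sum factors over $p\mid q_0$.
\item For $p\nmid h$, the rational function $hf/g+ux$ is not of the form $w(x)+\text{const}$ modulo $p$, because $f\not\equiv wg$. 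Bombieri's version of Weil's bound then gives $\ll_D p^{1/2}$. This is where the hypothesis that $p>2^kD$ is large enough relative to the degree is used.
\item For $p\mid h$ the sum is a linear character sum, which is $p$ or $0$ according to $p\mid u$.
\item Multiplying out and summing over $u$ with the divisor bound yields $q^\epsilon B\bigl((\Delta/q_0)^{1/2}+(q_0/(\Delta B^2))^{1/2}\bigr)$. The second term comes from the zero frequency and the short-interval range, with $\Delta=(q_0,h)$. This is the stated bound with $k=0$; I allow $k=0$ here as the base of the induction.
\end{itemize}

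\emph{Inductive step.} Assume the bound holds for $k-1$, and write $q=q'q_k$ with $q'=q_0\cdots q_{k-1}$.
\begin{itemize}
\item For $1\le m\le H$, shifting $n\mapsto n+q_km$ changes the sum $\Sigma$ by $O(Hq_k)$. Average over $m$, then apply Cauchy--Schwarz in $n$:
$$|\Sigma|^2\ll \frac{B}{H}\sum_{|m|<H}\Bigl|\sum_n e\Bigl(\tfrac{h}{q}\bigl(F(n+q_km)-F(n)\bigr)\Bigr)\Bigr|+B^2\frac{q_kH}{B}\quad\text{(schematically)},$$
where $F=f\bar g$.
\item Modulo $q_k$ the phase $hF/q$ is invariant under $n\mapsto n+q_km$. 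So by CRT, $e(hF(n)/q)=e(h\bar q_k F/q')\,e(h\bar q' F/q_k)$, and the $q_k$-part cancels in the differenced product.
\item What remains is a sum of the same shape modulo $q'$. The new numerator and denominator are
$$f_m(n)=f(n+q_km)g(n)-f(n)g(n+q_km),\qquad g_m(n)=g(n)g(n+q_km),$$
of degree $\le 2D$; the prime-size condition $p>2^{k-1}(2D)$ is preserved.
\item Apply the inductive hypothesis with $k-1$ to each $m\neq0$ for which the hypothesis holds, then choose $H\asymp B/q_k$. Taking square roots halves every exponent and produces the new term $(q_k/B)^{1/2}$. The old terms $(q_{k-j}/B)^{1/2^j}$ become $(q_{k+1-j}/B)^{1/2^{j}}$ after re-indexing, and the $q_0$ terms acquire exponent $1/2^{k+1}$.
\item The diagonal $m=0$ contributes $B^2/H\asymp Bq_k$, which is absorbed into the same term.
\end{itemize}

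\emph{Main obstacle.} The step needing real care is checking the non-degeneracy condition for the differenced pair $(f_m,g_m)$ modulo $p\mid q'$. One must show that if $f_m\equiv w\,g_m \pmod p$ with $\deg w\le k$, then
$$\frac{f(x+t)}{g(x+t)}-\frac{f(x)}{g(x)}\equiv w(x)\pmod p,\qquad t=q_km.$$
Iterating this relation in $t$ forces $f/g$ to agree modulo $p$ with a polynomial of degree $\le k+1$, contradicting the hypothesis. The exceptional cases are $p\mid m$ or $p\mid q_k$; they do not occur since $(q_k,q')=1$, and one only needs $p\nmid tq$. I would try to prove the required discrete-derivative fact by comparing poles of $f/g$ over $\overline{\mathbb F}_p$: a nonconstant denominator has a pole $\beta$, but then $\beta-t$ is a pole of $F(x+t)$, and the difference would need to be polynomial. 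Since $p>2^kD$, translates of poles cannot coincide, so all poles must cancel, which gives the contradiction. The values of $m$ with $p\mid m$ for some $p\mid q'$ have to be handled separately, by splitting $q'$ according to $(m,q')$ and bounding those terms with the gcd-dependent version of the bound.

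Throughout, I would track the gcd $\Delta=(q_0,h)$ by replacing $h$ with $h\bar q_k$, which leaves $\Delta$ unchanged. The factors $q^\epsilon$ come from the divisor function in the base case.
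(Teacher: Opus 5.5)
The paper does not prove this lemma; it quotes it as Theorem~2 of \cite{HB}. Your plan is the method behind that theorem: completion plus the Weil--Bombieri bound for $k=0$, then $q$-van der Corput differencing with shifts $t=q_km$ that peel off $q_k,\dots,q_1$, with degrees doubling so that $p>2^kD$ is inherited. Your exponent bookkeeping is correct. However, two steps do not work as written.

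\textbf{The differencing inequality.} Your inequality contains the term $B^2\cdot q_kH/B$. With your choice $H\asymp B/q_k$ this is $\asymp B^2$, which is the trivial bound. The same happens to $(Hq_k)^2$ if you square an $O(Hq_k)$ shifting error, and balancing such an error against $B^2/H$ only gives $B(q_k/B)^{1/3}$. You need the exact identity instead.
\begin{itemize}
\item Extend the summand $a(n)$ by $0$ outside $(A,A+B]$. Then $H\Sigma=\sum_{n\in\Z}\sum_{m=1}^{H}a(n+q_km)$ holds with no error term.
\item Write $a=\psi'\psi_k$, where $\psi_k$ has period $q_k$ and contains $e(h\overline{q'}F/q_k)$ \emph{and} the indicator of $(g(n),q_k)=1$.
\item Pull $\psi_k(n)$ out and bound it by $1$ before Cauchy--Schwarz. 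Otherwise that indicator survives into the differenced sums, which are then not of the shape your induction hypothesis covers.
\end{itemize}
This gives $|\Sigma|^2\le H^{-1}(B+Hq_k)\sum_{|l|<H}\bigl|\sum_n\psi'(n+q_kl)\overline{\psi'(n)}\bigr|$. The only costs are $B+Hq_k\le 2B$ and the diagonal term $B^2/H\asymp Bq_k$.

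\textbf{The degenerate shifts.} These cannot be set aside. If $p\mid(m,q')$ then $f_m\equiv 0\cdot g_m\pmod p$, so the hypothesis of the $(k-1)$-statement fails at $p$. This happens for $p\mid q_1\cdots q_{k-1}$ as well as for $p\mid q_0$. Your remedy of ``splitting $q'$ according to $(m,q')$'' leaves a condition $(g(n),d)=1$, which the lemma does not allow.

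A clean repair goes as follows. Since $f(x+t)\equiv f(x)$ and $g(x+t)\equiv g(x)$ coefficientwise mod $t$, one has $f_m=t\tilde f_m$ with $\tilde f_m\in\Z[X]$. Apply the induction to $(\tilde f_m,g_m)$, with $h$ replaced by a positive representative of $h\overline{q_k}\,t$ modulo $q'$. Nondegeneracy then holds at every $p\mid q'$:
\begin{itemize}
\item for $p\nmid m$, by the pole argument;
\item for $p\mid m$, because $\tilde f_m/g_m\equiv(f/g)'\pmod p$, and this is not a polynomial of degree $\le k$. A pole of order $e<p$ becomes one of order $e+1$, and a polynomial of degree $<p$ loses exactly one degree.
\end{itemize}
The new gcd is $(q_0,hm)$, which lies between $(q_0,h)$ and $(q_0,h)(q_0,m)$. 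Since $\sum_{0<|m|<H}(q_0,m)\le 2H\tau(q_0)$, the loss stays inside $q^{\epsilon}$. Alternatively, perform all $k$ differencing steps first and use nondegeneracy only at primes of $q_0$ in the final Weil step. There, degenerate shifts behave exactly like primes dividing $h$.

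\textbf{Smaller corrections.} The pole argument does not work because ``translates of poles cannot coincide''. It works because the polar set of $f/g$ would have to be invariant under $x\mapsto x+t$ with $t\not\equiv0\pmod p$, and such orbits have size $p>\deg g$. In the base case, the zero frequency gives $(\Delta/q_0)^{1/2}$ and the nonzero frequencies give $(q_0/(\Delta B^2))^{1/2}$, not the other way round. Also, for $p\mid h$ the local sum is $p-O(D)$ or $O(D)$ rather than $p$ or $0$, because the zeros of $g$ are excluded. Neither of these affects the result.
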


Below we check the condition of the lemma in our case: $f(m):=U(\tilde a_0+mN(KA),a_1,a_2)$, $g(m):=B_{13}(\tilde a_0+mN(KA),a_1,a_2)$, where $KA\mid (\alpha)$ and $(N(KA),q)=1$.
\begin{lemma}
    Let $\alpha=a_0+a_1r+a_2r^2$ and $q$ squarefree with $P^-(q)>256$, $(a_1a_2,q)=1$ and let $p\mid q$ while $p\not\mid\text{Disc}(f)$. Then there is no polynomial $l$ of any degree such that $$U\equiv B_{13}l\bmod p.$$
\end{lemma}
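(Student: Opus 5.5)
The plan is to use the explicit formulas for $U$ and $B_{13}$ given at the end of the previous subsection. Here $U=a_2^2$ and $B_{13}=-a_2a_0+(a_2^2c_1-a_1a_2c_2+a_1^2)$. In the application the variable is $m$, with $a_0=\tilde a_0+mN(KA)$. So I would view $U$ and $B_{13}$ as polynomials in $m$ over $\mathbb{F}_p$ and show that the required divisibility is impossible by comparing degrees and constant terms.

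First, I would record the degrees. $U$ does not depend on $a_0$, hence not on $m$. It is the constant $a_2^2$, which is nonzero mod $p$ because $(a_1a_2,q)=1$ and $p\mid q$. On the other hand $B_{13}$, as a polynomial in $m$, is
\[
B_{13}=-a_2N(KA)\,m+\bigl(-a_2\tilde a_0+a_2^2c_1-a_1a_2c_2+a_1^2\bigr).
\]
Its leading coefficient $-a_2N(KA)$ is nonzero mod $p$, since $(a_2,p)=1$ and $(N(KA),q)=1$. So $B_{13}$ has degree exactly $1$ in $m$ over $\mathbb{F}_p$.

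Second, suppose $U\equiv B_{13}l \bmod p$ for some polynomial $l$. Over the field $\mathbb{F}_p$ degrees add, so $\deg(B_{13}l)=1+\deg l\ge 1$ whenever $l\not\equiv 0$. If $l\equiv0$, the congruence gives $a_2^2\equiv 0\bmod p$, a contradiction. Otherwise the right-hand side has degree at least $1$ while the left-hand side is a nonzero constant, again a contradiction. The hypotheses $P^-(q)>256$ and $p\nmid\mathrm{Disc}(f)$ are not really needed here. They only keep $p$ away from the small exceptional primes demanded in Lemma \ref{HB}, and I would mention them only to note that the statement is used in that setting.

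The only point needing care is the second alternative in the hypothesis of Lemma \ref{HB}, that $g$ is not identically zero mod $p$. This is covered by the same nonvanishing of the leading coefficient $a_2N(KA)$. If the statement is instead meant with $U,B_{13}$ as polynomials in $a_0$ (with $a_1,a_2$ fixed), the argument is identical: the leading coefficient is $-a_2$ rather than $-a_2N(KA)$. The main obstacle, if any, is getting the leading coefficient right, i.e. making sure the substitution $a_0=\tilde a_0+mN(KA)$ does not kill the linear term mod $p$; this is exactly where $(N(KA),q)=1$ and $(a_2,q)=1$ are used.
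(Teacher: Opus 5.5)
Your proof is correct and is essentially the paper's argument. Both compare degrees, using that $U=a_2^2$ does not depend on $a_0$ while $B_{13}$ is linear in $a_0$ with leading coefficient $-a_2$. You also make explicit what the paper leaves implicit: that $a_2^2\not\equiv 0$ and $-a_2N(KA)\not\equiv 0 \bmod p$, via $(a_2,q)=1$ and $(N(KA),q)=1$ (the latter automatic for primes dividing $t'=q/(N(KA),q)$ in the actual application), and you handle the case $l\equiv 0$ separately, which is what makes the degree comparison rigorous.
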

\begin{proof}
    Recall 
    $$U= a_2^2,$$
$$B_{13}=- a_2\cdot a_0 + (a_2^2c_1 - a_1a_2c_2 + a_1^2).$$

    Then $U$ is of degree exactly 0, $B_{13}$ is of degree exactly 1, then the congruence in the lemma can not be true as identity over $a_0$.
\end{proof}

\section{Preliminaries of the proof}

The initial stages in our treatment of the sums $S_0$ and $S_1$ are the same, and will be described in this section. First we specify the sets $\mathcal{K}$ and $\mathcal{L}(K)$. Recall that
$$M=X^{(1+\delta)/3}, \;\;\;\;N= X^{(1+2\delta)/3}.$$

First, for some parameter $\delta>0$ we define $\mathcal{K}$ as the set of first degree prime ideals $K$ with 
\begin{enumerate}
    \item $X^{3\delta}<N(K)<X^{4\delta}$,
    \item $(N(K),\text{Disc}(f))=1$.
\end{enumerate}
Then $L\in\mathcal{L}(K)$ are ideals with $KL=(\alpha)=(a_0+a_1r+a_2r^2)$ such that\label{def l}
\begin{enumerate}
    
    \item  $ q\gg M^3$, $q$\text{ is squarefree}, $P^-(q)>\max(256,c_0)$,  $(a_1,a_2)=1$, $(a_1a_2,q)=1$,
    \item $\exists\text{ primes }q_1,q_2\mid q$ with $N^{5/7}<q_1<N^{5/7+\delta}$, $N^{6/7}<q_2<N^{6/7+\delta}$,
    \item $X^{1+\delta}<N(KL)\leq X^{1+2\delta}$,
    \item $\rho(\alpha)=1$ and $\alpha\in\mathcal{D},$
    \item $B_{13}\gg M^2$, $(q,B_{13})=1$, $(N(\alpha), B_{13}\text{Disc}(f))=1,$
    \item $2\not\mid a_0$,
    \item $P^-(N(L))>X^\theta$.
\end{enumerate}
Also we denote by $\mathcal{C}$ the set of pairs $(a_1,a_2)$, satisfying the first two conditions from the definition of $\mathcal{L}(K)$ (note they does not involve $a_0$):
\begin{enumerate} 
    \item  $ q\gg M^3$ $q$\text{ is squarefree}, $P^-(q)>\max(256)$,  $(a_1,a_2)=1$, $(a_1a_2,q)=1$.
    \item $\exists$ primes $q_1q_2\mid q$ with $N^{5/7}<q_1<N^{5/7+\delta}$, $N^{6/7}<q_2<N^{6/7+\delta}$,
\end{enumerate}
and as $\mathcal{R}\subset\R^3$ the set of real triples $(a_0,a_1,a_2)$ satisfying non arithmetic conditions on $\alpha$:
\begin{enumerate} \label{defr}
    \item $\alpha\in\mathcal{D},$
    \item $q\gg M^3$,
    \item $B_{13}\gg M^2,$
    \item $X^{1+\delta}<N(\alpha)\leq X^{1+2\delta}$.
\end{enumerate}
\begin{remark}We should mention that $\mathcal{L}(K)$ can contain multiple $L$ with the same $(\alpha)$, but only finitely many thanks to Lemma \ref{D}. And the terms in the sums $S_0$ and $S_1$ actually depend on $\alpha$, not on $L$, so indexing the sums $S_0$ and $S_1$ with $K\in\mathcal{K},\alpha\in\mathcal{R}$ is equivalent to the original indexing with $K\in\mathcal{K},L\in\mathcal{L}(K)$ for our purpose as it changes the sums by finitely many times.\end{remark}

We recall the sums $S_0$ and $S_1$ defined in the \hyperref[sec:test]{section 1.3}

$$ S_0 = \sum_{K \in \mathcal{K}} \sum_{L \in \mathcal{L}(K)} \left( \sum_{d\mid Q, N(L)} \lambda_d \right) \frac{\rho(KL)}{N(KL)} $$
and
$$ S_1 = \sum_{K \in \mathcal{K}} \sum_{L \in \mathcal{L}(K)} \left( \sum_{d\mid Q, N(L)} \lambda_d \right) R_{KL}. $$

According to Lemma \ref{5.2}, we will have $\rho(KL) = 1$ for every $L \in \mathcal{L}(K)$. We may therefore introduce a factor $\rho(KL)$ into the sum $S_1$. Since $\rho(KL) = 1$, we see that $L$ is composed of first degree prime ideals. 

Let
$ R = \prod\limits_{2 < N(P) < X^\delta} P, $
the product being restricted to first degree primes.

Recall 
$Q=\prod\limits_{p<X^{\delta}}p,$ the product being restricted to primes which split in $\Q(r)$.

We proceed to show that in the sums $S_0,S_1$ we may take $d$ to run over all square-free values of $N(A)$, where $A \mid  R, L$. To prove this we let $(L, d) = A$. Then $A \mid  Q$, so that $A$ must be composed of first degree prime ideals $P$ with $N(P) < X^\delta$. We also have $(2, KL) = 1$, whence $(2, A) = 1$. Since $\rho(KL) = 1$, we have $\rho(A) = 1$. Thus $A$ cannot have two distinct prime factors of the same norm. Thus $A$ must divide $R$, and $N(A)$ must be square-free. Since $A \mid  d$, we have $N(A) \mid  d^3$, whence $N(A) \mid  d$. On the other hand, if $p$ is a prime factor of $d$, then $p \mid  N(L)$, whence $L$ must have a prime ideal factor $P$ of norm $p$. Then $P \mid  L, d$ so that $P \mid  A$ and $p \mid  N(A)$. It follows that $d \mid  N(A)$, and hence that $N(A) = d$. Thus each value of $d$ arises as $N(A)$. Conversely we note that if $A \mid  R$ and $N(A)$ is square-free, then $A \mid  L$ implies $N(A) \mid  Q, N(L)$. Hence each possible $A$ produces an admissible value $d = N(A)$. This establishes the result claimed above.

It now follows that
$$ S_0 = \sum_{K \in \mathcal{K}} \sum_{L \in \mathcal{L}(K)} \left( \sum_{A\mid R,L} \lambda_{N(A)} \right) \frac{\rho(KL)}{N(KL)} $$
and
$$ S_1 = \sum_{K \in \mathcal{K}} \sum_{L \in \mathcal{L}(K)} \left( \sum_{A\mid R,L} \lambda_{N(A)} \right) \rho(KL) R_{KL}. $$

We see from the condition \eqref{norm_k} for $\mathcal{K}$ and the fact that $\lambda_d$ is supported on $d \leq X^{3\delta}$ that $K$ and $A$ can be taken to be coprime. For every $L \in \mathcal{L}(K)$ we have $\rho(KL) = 1$ by Lemma \ref{5.2}. We may therefore write
$$ S_0 = \sum_{K \in \mathcal{K}} \sum_{A\mid R} \lambda_{N(A)} \rho(KA) \sum_{\substack{L \in \mathcal{L}(K), A\mid L}} N(KL)^{-1} $$
and
$$ S_1 = \sum_{K \in \mathcal{K}} \sum_{A\mid R} \lambda_{N(A)} \rho(KA) \sum_{\substack{L \in \mathcal{L}(K), A\mid L}} R_{KL}. $$

\section{Bounding $S_1$}

According to what we have proved so far for $I$ satisfying $\rho(I)=1$ we have
$$\A_I = \#\{n-r\in\A:\;I\mid n-r\}=\#\{n\mid X<n\le 2X, \; \congruence{n}{k_I}{N(I)}\}=$$$$\#\left\{m\mid \frac{X-k_I}{N(I)}<m\le\frac{2X-k_I}{N(I)}\right\}=\frac{X}{N(I)}+\psi\left(\frac{X-k_I}{N(I)}\right)-\psi\left(\frac{2X-k_I}{N(I)}\right),$$

where $\psi(t)=t-[t]-1/2$. Then for $R_I$ we have

 $$R_I=\#\mathcal{A}_{I}-\frac{\rho(I)}{N(I)}X=\psi\left(\frac{X-k_I}{N(I)}\right)-\psi\left(\frac{2X-k_I}{N(I)}\right).$$
 Here we use the partial Fourier series for the fractional part.
 \begin{lemma}
    Let $H>0$, then
    $$\psi(t)=-\sum_{n\le H}\frac{\sin(2\pi nt)}{\pi n}+O\left(\min\{1,(H\lVert t\rVert)^{-1}\}\right).$$
 \end{lemma}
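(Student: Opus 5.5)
We prove the truncated Fourier expansion of $\psi(t)=t-[t]-1/2$ directly, comparing $\psi$ with its partial Fourier sums; this is the classical Vaaler-free elementary argument. Since $\psi$ is $1$-periodic and odd, we may assume $0<t<1$, and we may also assume $H\ge1$ (if $H<1$ the sum is empty and the claim is $|\psi(t)|\ll1$, which is trivial). The error term $\min\{1,(H\lVert t\rVert)^{-1}\}$ is symmetric under $t\mapsto 1-t$, so we may further restrict to $0<t\le 1/2$, where $\lVert t\rVert=t$.

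The key identity we use is, for $0<t<1$ and integer $N\ge1$,
$$-\sum_{n\le N}\frac{\sin(2\pi nt)}{\pi n}=-\int_{1/2}^{t}\sum_{n\le N}2\cos(2\pi nu)\,du=-\int_{1/2}^{t}\Big(\frac{\sin((2N+1)\pi u)}{\sin(\pi u)}-1\Big)du .$$
Here we differentiate the partial sum, use the Dirichlet kernel, and integrate from $1/2$, where every term $\sin(2\pi n\cdot\tfrac12)$ vanishes. The $-1$ integrates to $t-1/2=\psi(t)$. Hence, with $N=[H]$,
$$\psi(t)+\sum_{n\le H}\frac{\sin(2\pi nt)}{\pi n}=\int_{t}^{1/2}\frac{\sin((2N+1)\pi u)}{\sin(\pi u)}\,du .$$

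It remains to show this integral is $O(\min\{1,(Ht)^{-1}\})$.

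For the bound $(Ht)^{-1}$, integrate by parts once. Write $\sin((2N+1)\pi u)=-\frac{d}{du}\frac{\cos((2N+1)\pi u)}{(2N+1)\pi}$. The boundary terms are
$$\ll \frac{1}{N\sin(\pi t)}\ll\frac{1}{Ht},$$
using $\sin(\pi u)\ge 2u$ on $[0,1/2]$. The remaining integral is
$$\ll \frac1N\int_t^{1/2}\frac{|\cos(\pi u)|}{\sin^2(\pi u)}\,du\ll\frac{1}{N t}.$$

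For the bound $O(1)$, the case $Ht\ge1$ is already covered by the previous estimate. If $t<1/H$, split the integral at $1/H$:
\begin{itemize}
\item on $[t,1/H]$ use $|\sin((2N+1)\pi u)|\le(2N+1)\pi u$ and $\sin(\pi u)\ge 2u$, so the integrand is $O(N)$ on an interval of length $\le 1/H$, giving $O(1)$;
\item on $[1/H,1/2]$ apply the integration by parts bound above with $t$ replaced by $1/H$, again giving $O(1)$.
\end{itemize}
This also shows the sum is uniformly bounded.

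The only delicate point is keeping the constants uniform near $t\to0$ and for non-integer $H$. Both are handled by the splitting at $1/H$ and by noting that $N=[H]\asymp H$ for $H\ge1$.
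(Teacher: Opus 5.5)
Your proof is correct. The paper gives no proof of this lemma: it quotes the truncated Fourier expansion of the sawtooth as a standard fact, and then treats the error term separately by expanding $\min\{1,(H\lVert t\rVert)^{-1}\}$ as a Fourier series with $c_n\ll\min\{(\log H)/H,\,H/n^2\}$.

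Your Dirichlet-kernel argument, with $D_N(u)=\sin((2N+1)\pi u)/\sin(\pi u)$, runs as follows. You write the partial sum as an integral of $D_N(u)-1$ starting from $1/2$. For $\lVert t\rVert\ge 1/H$ you integrate by parts once. For small $\lVert t\rVert$ you use the trivial bound $|D_N(u)|\le 2N+1$ on $[t,1/H]$. This is the classical self-contained proof of exactly the stated form, so it supplies the argument the paper omits.

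An approach via Vaaler's lemma would instead give smoothed coefficients and an explicitly nonnegative trigonometric-polynomial majorant for the error. That is closer to what the paper reconstructs afterwards by Fourier-expanding the $\min$, but it is not needed for the statement as given.

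Two small corrections.

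\begin{itemize}
\item Your key identity has the wrong sign. Since $\sum_{n\le N}\frac{\sin(2\pi nt)}{\pi n}=\int_{1/2}^{t}(D_N(u)-1)\,du$, you get $\psi(t)+\sum_{n\le N}\frac{\sin(2\pi nt)}{\pi n}=-\int_t^{1/2}D_N(u)\,du$; the case $N=0$ confirms this. The slip is harmless, because you only bound the absolute value.
\item Your reductions skip two trivial cases.
\begin{itemize}
\item For $t\in\Z$, $\psi$ is not odd there. But $\psi(t)=-1/2$, the sum vanishes, and the allowed error is $\min\{1,\infty\}=1$.
\item For $1\le H<2$, you have $1/H>1/2$, so the split at $1/H$ is vacuous. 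The bound $|D_N|\le 3$ on $[t,1/2]$ already gives $O(1)$.
\end{itemize}
\end{itemize}
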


Now that
$$ S_1 = \sum_{K \in \mathcal{K}} \sum_{A\mid R} \lambda_{N(A)} \rho(KA) \sum_{\substack{L \in \mathcal{L}(K), A\mid L}} R_{KL}, $$
we take a few more conditions out of $L\in\mathcal{L}(K)$ and consider the innermost sum in
$$S_1=\sum_{K \in \mathcal{K}} \sum_{\substack{A\mid R, N(A)\le X^{3\delta}\\ (N(A),\text{Disc}(f))=1}} \lambda_{N(A)} \rho(KA) \sum_{\substack{L \in \mathcal{L}(K), A\mid L}} R_{KL}.$$

We expand $R_{KL}$ as difference of $\psi$-s and apply the Fourier cut-off of $\psi$ with $H=X^{\eta}$ with $\eta\le1/10$. 

Then the sum over $n\le H$ contributes
$$\ll \sum_{n\le H}n^{-1}\left|\sum_{L\in\mathcal{L}(K),A\mid L}\sin\left(2\pi n\frac{X-k_\alpha}{N(\alpha)}\right)-\sin\left(2\pi n\frac{2X-k_\alpha}{N(\alpha)}\right)\right|$$
$$\ll\sum_{n\le H}n^{-1}(|\Sigma_1(n)|+|\Sigma_2(n)|),$$
where $$\Sigma_j(n)=\sum_{L\in\mathcal{L}(K),A\mid L}e_{N(\alpha)}\left(n(jX-k_\alpha)\right).$$

While for the error term we expand $\min$ as Fourier series
$$\min\{1,(H\lVert t\rVert)^{-1}\}=\sum_{n=-\infty}^\infty c_ne(nt),$$
and using trivial bounds and integration by parts we find that $$c_n\ll\min\left\{\frac{\log H}{H},\frac{H}{n^2}\right\}.$$
Then the error contributes
\begin{equation}\begin{split}\ll\sum_{\substack{L\in\mathcal{L}(K),A\mid L\\ j=1,2}}\min\left\{1,\frac{1}{H\lVert(jX-k_\alpha)/N(\alpha)\rVert}\right\}=\sum_{\substack{L\in\mathcal{L}(K),A\mid L\\ j=1,2}}\sum_{n=-\infty}^\infty c_n e_{N(\alpha)}\left(n(jX-k_\alpha)\right)\\\ll\sum_{\substack{n\\j=1,2}} \min\left\{\frac{\log H}{H},\frac{H}{n^2}\right\} |\Sigma_j(n)|.\end{split}\end{equation}

Now we take $n=0$ out of the sum and cut the rest by $H^2$. For the tail $n>H^2$ we estimate the exponent inside of $\Sigma_j(n)$ trivially by 1, thus by
$\sum_{n>H^2}{H}{n^{-2}}\ll H^{-1}$
the sum over $n>H^2$ contributes the same magnitude as the term $n=0$.
\begin{equation}\begin{split}\label{rkl}\sum_{\substack{L \in \mathcal{L}(K), A\mid L}} R_{KL}\ll (\log H)H^{-1}\sum_{\substack{L \in \mathcal{L}(K), A\mid L}} 1+(\log H)\sum_{\substack{n=1\\ j=1,2}}^\infty \min\{n^{-1},Hn^{-2}\}|\Sigma_j(n)| \\\ll
(\log H)H^{-1}\sum_{\substack{L \in \mathcal{L}(K), A\mid L}} 1+(\log H)\sum_{\substack{n=1\\ j=1,2}}^{H^2} \min\{n^{-1},Hn^{-2}\}|\Sigma_j(n)|.\end{split}\end{equation}

Now we apply Lemma \ref{5.2} with $J=KA$, it implies that there exists an integer $k_{KA}$ such that
    $$\congruence{n-r}{0}{KA}\Leftrightarrow n\equiv k_{KA}\bmod{N(KA)}.$$ 
Then $\alpha\equiv \congruence{a_0+a_1r+a_2r^2}{0}{KA}$ is equivalent to
$$\congruence{a_0}{-k_{KA}a_1-a_2k_{KA}^2}{N(KA)}.$$
So $a_0$ runs over an arithmetic progression modulo the norm of the ideal. Thus
$$\sum_{\substack{L \in \mathcal{L}(K), KA\mid (\alpha)}} 1\ll N^2\left(\frac{N}{N(KA)}+1\right)$$
as $|a_i|\ll N(\alpha)^{1/3}\ll N,$
so the first term in (\ref{rkl}) contributes to $S_1$
\begin{equation}\begin{split}(\log H)H^{-1}N^2\sum_{K \in \mathcal{K}} \sum_{\substack{A\mid R, N(A)\le X^{3\delta}}} |\lambda_{N(A)}| \rho(KA) \left(\frac{N}{N(KA)}+1\right)\\\ll
(\log H)H^{-1}N^2\sum_{K \in \mathcal{K}} \sum_{\substack{A\mid R, N(A)\le X^{3\delta}}} \left(\frac{N}{N(KA)}+1\right)\\ \ll(\log H)H^{-1}N^2 (N(\log X)^2+X^{4\delta+3\delta})\ll (\log H)H^{-1}N^3(\log N)^2.\label{final s1 1}\end{split}\end{equation}

Now we turn our attention to the exponential sums. Write 
$$E_j(n)=\sum_{L\in\mathcal{L}(K),A\mid L}e\left(\frac{njX}{N(\alpha)}-\frac{nU\overline B_{13}}{q}\right),$$
then by Lemma \ref{kalpha}

$$\Sigma_j(n)-E_j(n)=\sum_{L\in\mathcal{L}(K),A\mid L}e\left(\frac{njX-nk_\alpha}{N(\alpha)}\right)\left(1-e\left(\frac{nk_\alpha}{N(\alpha)}-\frac{nU\overline B_{13}}{q}\right)\right)$$
$$\ll\sum_{L\in\mathcal{L}(K),A\mid L}\left|-\frac{nU}{qB_{13}}+\frac{nB_{23}}{N(\alpha)B_{13}}\right|,$$
by $B_{23},U\ll N^2$, $B_{13}\gg M^2$, $q,N(\alpha)\gg M^3$ we get
$$\ll \frac{nN^2}{M^5}\sum_{L\in\mathcal{L}(K),A\mid L}1.$$
Therefore to change $\Sigma_j(n)$ for $E_j(n)$ costs
$$ \sum_{K \in \mathcal{K}} \sum_{\substack{A\mid R, N(A)\le X^{3\delta}}} |\lambda_{N(A)}| \rho(KA) (\log H)\sum_{\substack{n=1\\ j=1,2}}^{H^2} n\cdot\min\{n^{-1},Hn^{-2}\}\frac{N^2}{M^5}\sum_{L\in\mathcal{L}(K),A\mid L}1$$
$$\ll\frac{N^2}{M^5}\cdot H(\log H)^2\cdot N^3(\log N)^2=H(\log H)^2\frac{N^5}{M^5}(\log N)^2\label{final s1 2}$$
where we used the calculation from (\ref{final s1 1}). Clearly the obtained term is negligible for the whole $S_1$.

Now we are going to apply Lemma \ref{HB} to show that for suitable parameters $S_1=o(X)$.

First for any $a_1,a_2$ we put $\mathcal{R}(a_1,a_2)=\{a_0\mid (a_0,a_1,a_2)\in\mathcal{R}\}\subset\R$ and write
$$E_j(n)=\sum_{K \in \mathcal{K}} \sum_{\substack{A\mid R, N(A)\le X^{3\delta}}} \lambda_{N(A)} \rho(KA)
\sum_{(a_1,a_2)\in\mathcal{C}}\sum_{\substack{a_0\in\mathcal{R}(a_1,a_2)\\ a_0\equiv \tilde a_0 \bmod N(KA)\\ (B_{13},q)=1}}e\left(\frac{njX}{N(\alpha)}-\frac{nU\overline B_{13}}{q}\right)$$
with $\tilde a_0=\tilde a_0(a_1,a_2; KA)\equiv -k_{KA}(a_1+a_2 k_{KA})\bmod N(KA)$, where $k_{KA}$ is an integer defined by lemma \ref{lemma 1} and lemma \ref{5.2}.

Now we make change of variables $$a_0=\tilde a_0+mN(KA),$$
consider $t=(N(KA),q)$, $t'=q/t$.
\begin{notation}
    Bars below denote the inverse with respect to the denominator. 
\end{notation}
The sum over $a_0$ is empty whenever $(B_{13},t)>1$, so we can assume $(B_{13},t)=1$. Taking $(u,v)=(t,t')$ in
$$
\frac{\overline{u}}{v}+\frac{\overline{v}}{u}\equiv\frac{1}{uv}\bmod{1}\quad\text{for }(u,v)=1,
$$
we deduce
$$e\left(-\frac{nU\overline B_{13}}{q}\right)=e\left(-\frac{nU\overline{t'B_{13}}}{t}-\frac{nU\overline{tB_{13}}}{t'}\right).$$ 

We also have
$$U(\tilde a_0+mN(KA),a_1,a_2)\overline {B_{13}(\tilde a_0+mN(KA),a_1,a_2)}\equiv U(\tilde{\mathbf{a}})\overline{ B_{13}(\tilde{\mathbf{a}})} $$
with $\tilde{\mathbf{a}}:=(\tilde a_0,a_1,a_2)$, so the first term does not depend on $m$. Put
$$f(m)=U(\tilde a_0+mN(KA),a_1,a_2),\;\; g(m)=B_{13}(\tilde a_0+mN(KA),a_1,a_2).$$
Thus we get
\begin{equation}\begin{split}E_j(n)=\sum_{K \in \mathcal{K}} \sum_{\substack{A\mid R, N(A)\le X^{3\delta}}} \lambda_{N(A)} \rho(KA)\cdot\\\sum_{(a_1,a_2)\in\mathcal{C}}e\left(-\frac{nU(\tilde{\mathbf{a}})\overline{t'B_{13}(\tilde{\mathbf{a}})}}{t}\right)\sum_{\substack{m\in\mathcal{R}'(a_1,a_2)\\ (g(m),t')=1}}e\left(\frac{njX}{N(\alpha)}-\frac{nf(m)\overline{t g(m)}}{t'}\right),\end{split}\end{equation}

where $\mathcal{R}'(a_1,a_2)=\{m\mid \tilde a_0+mN(KA)\in\mathcal{R}(a_1,a_2)\}$. 

The only obstacle to apply lemma \ref{HB} is $e\left(njX/N(\alpha)\right)$, to get rid of it we say use summation by parts. Note that $\mathcal{R'}(a_1,a_2)$ is a finite union of intervals contained inside $(-O(N/N(KA));O(N/N(KA)))$.

Obviously we have
$$\partial N(\alpha)^{-1}/\partial a_0\ll N(\alpha)^{-4/3}.$$
We get
$$\sum_{\substack{m\in\mathcal{R}'(a_1,a_2)\\ (g(m),t')=1}}e\left(\frac{njX}{N(\alpha)}-\frac{nf(m)\overline{t g(m)}}{t'}\right)\ll \left(1+\frac{nXN}{M^4}\right)\max_{B, B'\ll N/N(KA)}\left|\sum_{\substack{B'\le m\le B'+B\\ (g(m),t')=1}} e\left(\frac{n\overline tf(m)\overline{ g(m)}}{t'}\right)\right|.$$
Note that $XNM^{-4}\le 1$.

Now we apply lemma \ref{HB} with $k=2,D=1$ and the factorization $$t'=\frac{q/(q_1q_2)}{(q/(q_1q_2),t)}\cdot\frac{q_1}{(q_1,t)}\cdot\frac{q_2}{(q_2,t)},$$
it yeilds
$$\max_{B\ll N/N(KA)}\left|\sum_{\substack{m\le B\\ (g(m),t')=1}} e\left(\frac{n\overline tf(m)\overline{ g(m)}}{t'}\right)\right|\ll $$
$$\ll\frac{N^{1+\epsilon}}{N(KA)}\left(\left(\frac{\Delta(t',q_1q_2)}{t'}\right)^{1/8}+\left(\frac{t'N(KA)}{\Delta N^2(t',q_1q_2)}\right)^{1/8}+\left(\frac{(t',q_2)N(KA)}{N}\right)^{1/2}+\left(\frac{(t',q_1)N(KA)}{N}\right)^{1/4}\right)$$
where $\Delta=\left(\frac{t'}{(t',q_1q_2)};n\overline t\right)$,
$$\ll \frac{N^{1+\epsilon}}{N(KA)}\left(\left(\frac{(q,n)q_1q_2N(KA)}{q(q_1q_2,N(KA))}\right)^{1/8}+\left(\frac{qN(KA)}{ N^2q_1q_2}\right)^{1/8}+\left(\frac{q_2N(KA)}{N}\right)^{1/2}+\left(\frac{q_1N(KA)}{N}\right)^{1/4}\right).$$
Applying the trivial bounds and the following ones:
$$N^{3\delta}<N(KA)<N^{7\delta},$$
$$N^{5/7}<q_1<N^{5/7+\delta},$$
$$N^{6/7}<q_2<N^{6/7+\delta},$$
$$M^3\ll q\ll N^3,$$
we get
$$\max_{B\ll N/N(KA)}\left|\sum_{\substack{m\le B\\ (g(m),t')=1}} e\left(\frac{n\overline tf(m)\overline{ g(m)}}{t'}\right)\right|\ll(q,n)N^{13/14-17/24\delta+\epsilon}\ll(q,n)N^{13/14+\epsilon}.$$
We lost some $\delta$, though it does not matter much. Thus
$$(\log H)\sum_{\substack{n=1\\ j=1,2}}^{H^2} \min\{n^{-1},Hn^{-2}\}|E_j(n)|\ll(\log H)N^{3-1/14+\epsilon}\sum_{n=1}^{H^2}\min\{1,H/n\}(q,n)\ll$$
$$\ll(\log H)HN^{3-1/14+2\epsilon}.$$

Putting everything together
$$S_1\ll (\log H)H^{-1}N^3(\log N)^2+(\log H)HN^{3-1/14+7\delta+3\epsilon}.$$

Taking $H=N^{1/28-7/2\delta}$ we get
$$S_1\ll N^{4\epsilon}( H^{-1}N^{3}+HN^{3-1/14+7\delta})\ll N^{3-1/28+7/2\delta}=X^{(3-1/28+7/2\delta)(1+2\delta)/3}=$$$$N^{83/84+22/7\delta+7/3\delta^2}.$$
So $S_1=o(X)$ for $\delta\le10^{-3}.$

\section{Treating $S_0$}

Here we mostly follow Heath-Brown's paper \cite{HB}, thas is section 6,7 and 8, though we slightly change non essential steps due to the different setting. At the beginning we take some steps from Dartyge and Maynard \cite{DM}. 

Recall
$$ S_0 = \sum_{K \in \mathcal{K}} \sum_{A\mid R} \lambda_{N(A)} \rho(KA) \sum_{\substack{L \in \mathcal{L}(K), A\mid L}} N(KL)^{-1}, $$
we want to isolate the variable $a_0$. We note that the condition $L\in\L(K)$ implies that $(q, B_{13})=1$
and that $(a_0, a_1, a_2) \in \mathcal{R}$, where $\mathcal{R}$ is defined in \hyperref[defr]{section 3}, but otherwise there are no further dependencies between $a_0$ and $a_1, a_2$. We use M\"obius inversion to detect the condition $(q, B_{13}) = 1$, this gives rise to a squarefree $r \mid (q, B_{13})$ which we decompose as $r = r_1r_2$ with $r_1 \mid N(KA)$ and $(r_2, N(KA)) = 1$. Therefore

$$ S_0 = \sum_{K \in \mathcal{K}} \sum_{A} \lambda_{N(A)}\sum_{(a_1,a_2)\in\mathcal{C}\cap\mathcal{R}'}\sum_{r_1\mid (N(KA),q)}\mu(r_1)\sum_{\substack{(r_2,N(KA))=1\\ r_2\mid q}}\mu(r_2) \times$$$$\times\sum_{\tilde a\in S(r_1,r_2)}\sum_{\substack{a_0: (\alpha)\in\mathcal{R}\\ \congruence{a_0}{\tilde a_0}{r_2N(KA)}}}\frac{1}{N(\alpha)},$$
where $\mathcal{R}'=\{(a_1,a_2)\mid \exists a_0 \text{ s.t. }(a_0,a_1,a_2)\in\mathcal{R}\}$ is the projection of $\mathcal{R}$ and
$$S(r_1,r_2)=\{0\le a_0< r_2N(KA), \text{ such that } r_1r_2\mid B_{13}\text{ and } KA\mid (\alpha)\}.$$

Let $I(a_1,a_2)=\int\limits_{(\alpha)\in\mathcal{R}}\frac{da_0}{N(\alpha)}$, then by partial summation the inner sum is
$$\sum_{\substack{a_0: (\alpha)\in\mathcal{R}\\ \congruence{a_0}{\tilde a_0}{r_2N(KA)}}}\frac{1}{N(\alpha)}=\frac{I(a_1,a_2)}{r_2N(KA)}+O(X^{-1-\delta}).$$

The $O(X^{-1 - \delta})$ error term contributes to $S_0$ a total
$$\ll \frac{1}{X^{1 + \delta - o(1)}} \sum_{N(K) \ll X^{4\delta}} \sum_{N_P(A) \le X^{3 \delta}} \sum_{(a_1, a_2) \in \mathcal{C}} 1 \ll X^{-1/3 + 8 \delta}.$$

Now we calculate $\#S(r_1,r_2)$. By Chinese reminder theorem $$\#S(r_1, r_2) = \prod\limits_{p \mid r_1r_2N(KA)} \#S(r_1, r_2, p),$$
where
$$S(r_1,r_2,p)=\begin{cases}
    \{0\le a_0< p,\text{ such that }p\mid(N(\alpha),B_{13})\}&\text{ if }p\mid r_1,\\
    \{0\le a_0< p,\text{ such that }p\mid B_{13}\}&\text{ if }p\mid r_2,\\
    \{0\le a_0< p,\text{ such that }p\mid N(\alpha)\}&\text{ if }p\mid N(KA)/r_1.
\end{cases}$$

We now prove that in all the cases we have $\#S(r_1,r_2,p)=1$.

The second case is easy, as $B_{13}$ is linear in $a_0$ with leading coefficient equal to $a_2$, so with leading coefficient coprime to $q$.

The first case follows from $p\mid q$ and $q\mid R=\text{Resultant}(B_{13},N(\alpha);a_0)$, hence $B_{13}$ and $N(\alpha)$ has a common root modulo $p$ and it is the only one as $B_{13}$ is linear.

The third case follows from lemma \ref{lemma 1} and lemma \ref{5.2}.

Therefore $\#S(r_1,r_2)=1$. Hence

$$S_0= \sum_{K \in \mathcal{K}} \sum_{A} \lambda_{N(A)}\sum_{(a_1,a_2)\in\mathcal{C}\cap\mathcal{R}'}\sum_{r_1\mid (N(KA),q)}\mu(r_1)\sum_{\substack{(r_2,N(KA))=1\\ r_2\mid q}}\mu(r_2) \frac{I(a_1,a_2)}{r_2N(KA)}+o(1).$$
The sum over $r_1$ implies $(q,N(KA))=1$.  For convenience we define $g(d)=\#\{I: N(I)=d\}.$ Now we compute the sums over $A$ and $K$.

$$\sum_{\substack{K\in\mathcal{K}\\ (q,N(K))=1}}\frac{1}{N(K)}=\sum_{K\in\mathcal{K}}\frac{1}{N(K)}+O(X^{-3\delta+o(1)})=$$$$\sum_{X^{3\delta< p<X^{4\delta}}}\frac{g(p)}{p}+O(X^{-3\delta+o(1)})=\log\frac{4}{3}+o(1)$$
by the prime ideal theorem.

And as for $A$ we have

$$\sum_{(N(A),q)=1}\frac{\lambda_{N(A)}}{N(A)}=\sum_{\substack{d\le X^{3\delta}\\ (d,q)=1}}\frac{\lambda_d g(d)}{d}\ge (C_0+o(1))\prod_{p\mid q}\left(1-\frac{g(p)}{p}\right)^{-1}\prod_{p\le X^\delta}\left(1-\frac{g(p)}{p}\right).$$
Putting these expressions together yields
$$S_0\ge (C+o(1))\prod_{p\le X^\delta}\left(1-\frac{g(p)}{p}\right)\sum_{(a_1,a_2)\in\mathcal{C}\cap\mathcal{R}'}I(a_1,a_2)h(q),$$
where we have defined
$$h(q)=\mu^2(q)\prod_{p\mid q}\left(1-\frac{g(p)}{p}\right)^{-1}\left(1-\frac{1}{p}\right)\1{P^-(q)>\max(256,c_0)}.$$

Now we split $\mathcal{R}$ into small boxes 
$$\mathcal{B}=[A,A+M]\times[B,B+M]\times[C,C+M],$$
we call such a box good if $\mathcal{B}\subseteq\mathcal{R}$. Then 
\begin{equation}\label{sogb}\sum_{(a_1,a_2)\in\mathcal{C}\cap\mathcal{R}'}I(a_1,a_2)h(q)\ge \sum_{\text{good }\mathcal{B}}\sum_{(a_1,a_2)\in\mathcal{C}\cap B'}I(a_1,a_2)h(q),\end{equation}
where $'$ stands for projection again.

Now, with $a_i$ in a small box, the integral $I(a_1,a_2)$ can be approximated with no dependence on $a_i$ themselves
$$I(a_1,a_2)=\frac{M}{N(A,B,C)}(1+o(1)).$$

For the part with calculating the sum with $h(q)$ we follow Heath-Brown's lattice points approach from \cite{HB}.

\begin{equation}\sum_{(a_1,a_2)\in\mathcal{C}\cap B'}h(q)=\sum_{\substack{N^{5/7}<q_1<N^{5/7+\delta}\\ N^{6/7}<q_2<N^{6/7+\delta}}}\sum_{\substack{B\le a_1\le B+M\\ C\le a_2\le C+M\\ q_1q_2\mid q}}h(q)\ge\frac{1}{3}\sum_{q_1,q_2}C(q_1q_2),\label{cqq}\end{equation}
 $$\text{where }C(d)=\sum_{\substack{B\le a_1\le B+M\\ C\le a_2\le C+M\\ d\mid q}}h(q).$$

Now we state three lemmas, generalizing lemmas 10, 11 and 7 respectively from \cite{HB}, without the proof, as it is almost the same.

\begin{lemma}
    For real $A,B\ll N$, $M$ and $N$ as before and an ideal $R$ 
    
    \noindent let $S(R)=\#\{A\le a\le A+M, B\le b\le B+M, R\mid a-br\}.$ Then
    $$\sum_{N(R)\le X, \rho(R)=1}\left|S(R)-\frac{M^2}{N(R)}\right|\ll (M+X)X^\epsilon.$$
\end{lemma}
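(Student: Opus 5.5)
The plan follows Heath-Brown's Lemma 10: reduce the condition $R\mid a-br$ to a congruence modulo $N(R)$, and count points of a lattice in a box. By Lemma~\ref{lemma 1}, for $\rho(R)=1$ there is a unique $k_R \bmod N(R)$ with $R\mid n-r \iff n\equiv k_R \bmod N(R)$. I will also use that $I\mid m\iff N(I)\mid m$ for integers $m$.

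\emph{Step 1: the congruence.} First, from $R\mid a-br$ I will show $R\mid a-bk_R$. Write $a-bk_R=(a-br)+b(r-k_R)$ and note that $R\mid r-k_R$. Conversely, if $(b,N(R))=1$, the same identity shows that $R\mid a-br$ is equivalent to $N(R)\mid a-bk_R$. Common factors of $b$ and $N(R)$ need separate handling. If $P\mid R$, $P\mid b$ and $P\mid a-br$, then $P\mid a$, and if moreover $P\mid(a,b)$ then $p\mid(a,b)$. So I will split according to $g=(a,b)$ and reduce to primitive pairs. Here $R=P^e R'$, and $p^j\mid g$ forces $P^{j}$-divisibility automatically. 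Thus $S(R)$ counts points of the lattice
\[
\Lambda_R=\{(a,b)\in\Z^2:\ a\equiv bk_R \bmod N(R)\}
\]
(with the appropriate modifications when $g>1$), and $\Lambda_R$ has determinant $N(R)$.

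\emph{Step 2: lattice counting.} I take a reduced basis of $\Lambda_R$ with first successive minimum $\lambda_1=\lambda_1(R)$, so that $\lambda_1\lambda_2\asymp N(R)$. The standard estimate for lattice points in a square of side $M$ is
\[
S(R)=\frac{M^2}{N(R)}+O\Big(\frac{M}{\lambda_1}+1\Big).
\]
More precisely, the error is $O(1)$ unless $\lambda_1\le M$. If $\lambda_1>M$, then at most one line of the lattice meets the box. Summing over $R$, the trivial $+1$ terms and the $M^2/N(R)$ terms together contribute $O(X)$ up to logarithms, the latter provided $M^2\le X$ or bounding them by $X^\epsilon M$ otherwise.

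\emph{Step 3: the main obstacle.} The hard part is bounding $\sum_R M/\lambda_1(R)$ over $R$ with $\lambda_1(R)\le M$. I group these by the shortest vector $(u,v)\neq 0$, with $|u|,|v|\le \lambda_1$, which satisfies $R\mid u-vr$. Given $(u,v)$, the ideals $R$ with $\rho(R)=1$ dividing $u-vr$ number at most $\tau(N(u-vr))\ll X^\epsilon$, since $N(u-vr)$ is polynomially bounded. Hence the number of $R$ with $\lambda_1(R)\in(\Lambda,2\Lambda]$ is $\ll \Lambda^2X^\epsilon$. Their contribution is $\ll (M/\Lambda)\Lambda^2 X^\epsilon=M\Lambda X^\epsilon\le M^2X^\epsilon$, which is too big as it stands. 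So I must also use $N(R)\le X$ together with $\lambda_1^2\le N(R)$, giving $\Lambda\le\min(M,X^{1/2})$. To sharpen further, I will sum $\min(M/\lambda_1,\,M^2/N(R))$-type errors more carefully, using $S(R)\le M^2/N(R)+M/\lambda_1+1$ and the bound $S(R)\ll M/\lambda_1+1$ for the difference. I expect this dyadic bookkeeping, combining the divisor bound with the constraint $N(R)\le X$, to yield $(M+X)X^\epsilon$, exactly as in Heath-Brown's Lemma 10. If a direct count falls short, I will fall back on his argument of counting pairs $(R,(u,v))$ with $u-vr\equiv 0 \bmod R$ via $N(u-vr)$.
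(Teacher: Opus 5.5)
\textbf{Your Step~3 is a genuine gap, and it cannot be closed within your framework.} Once you bound each term by $|S(R)-M^2/N(R)|\ll M/\lambda_1(R)+1$, the loss is irreversible. By Minkowski, $\lambda_1(R)\le N(R)^{1/2}$ in the sup norm. So every $R$ with $N(R)\le \min(X,M^2)$ has $\lambda_1(R)\le M$ and contributes at least $MN(R)^{-1/2}$ to your majorant. Summing over first-degree primes gives
\[
\sum_{N(R)\le X,\ \lambda_1(R)\le M}\frac{M}{\lambda_1(R)}\gg \frac{M\min(M,X^{1/2})}{\log X}.
\]
This is a lower bound for your majorant. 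Hence $M\min(M,X^{1/2})X^{\epsilon}$ is the best that route can give, and it exceeds $(M+X)X^{\epsilon}$ as soon as $M\ge X^{1/2+\epsilon}$. In the application, $M\approx N$ and $X\approx N^{11/7}$, so you would get $N^{25/14}$ instead of $N^{22/14}$. The proposed sharpening via $\min(M/\lambda_1,M^2/N(R))$ is not a valid bound for the error either. If exactly one lattice line crosses the box, the error is of order $M/\lambda_1$, while $M^2/N(R)$ is smaller. The quantity $M/\lambda_1$ is a worst case over boxes; what you need is an error term that is small for typical $R$.

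\textbf{The missing idea} is to spread the error over many lattice vectors carrying summable weights, instead of attaching $M/\lambda_1$ to the shortest one. Write $D=N(R)$, $k=k_R$, and count row by row: $S(R)=\sum_b\#\{a:\ a\equiv bk \bmod D\}$. Remove complete periods in $a$, and apply Erd\H{o}s--Tur\'an with $H=M$ to $\{bk/D\}$, for $b$ in an interval of length $M$. This gives
\[
\Big|S(R)-\frac{M^2}{D}\Big|\ll 1+\frac{M}{D}+\sum_{h\le M}\frac{1}{h}\min\Big(M,\frac{1}{\|hk/D\|}\Big).
\]
For each $h$ choose $u=u_h(R)\equiv hk \pmod D$ with $|u|\le D/2$. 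Then $\|hk/D\|=|u|/D$ and $(u,h)\in\Lambda_R$, that is, $R\mid u-hr$ with $u-hr\neq 0$. By the divisor bound you already invoke, each pair $(u,h)$ comes from $\ll X^{\epsilon}$ ideals $R$. Reading the term $u=0$ as $M$, this yields
\[
\sum_{N(R)\le X}\ \sum_{h\le M}\frac{1}{h}\min\Big(M,\frac{D}{|u|}\Big)\ll X^{\epsilon}\sum_{h\le M}\frac{1}{h}\sum_{|u|\le X}\min\Big(M,\frac{X}{|u|}\Big)\ll (M+X)X^{2\epsilon}.
\]
Together with $\sum_R(1+M/N(R))\ll X+M\log X$, this proves the lemma. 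So your closing idea of counting pairs $(R,(u,v))$ is the right ingredient. However, the pairs must be all $(u,h)$ with $h\le M$, each weighted by $h^{-1}\min(M,N(R)/|u|)$; equivalently, one can use all convergents of $k/D$ via the Ostrowski expansion. The paper itself gives no argument beyond pointing to Heath-Brown's Lemma~10.

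\textbf{A minor point on Step~1:} no splitting according to $(a,b)$ or $(b,N(R))$ is needed. Since $R\mid r-k_R$, you have $a-br\equiv a-bk_R \pmod R$ for all integers $a,b$. By the last part of Lemma~\ref{lemma 1}, $R\mid a-br\iff N(R)\mid a-bk_R$. Thus $S(R)$ is exactly the number of points of $\Lambda_R$ in the box, and the ``modifications when $g>1$'' are spurious.
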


\begin{lemma}
    For real $A,B\ll N$, $M$ and $N$ as before and an ideal $R$  
    
    \noindent let $T(R)=\#\{A\le a\le A+M, B\le b\le B+M, (a,b)=1, R\mid a-br\}.$ 
    
    \noindent Set $\gamma(R)=\prod\limits_{p\mid N(R)}\left(1+\frac{1}{p}\right)^{-1}$. Then
    $$\sum_{N(R)\le X,}\left|T(R)-\frac{6}{\pi^2}\frac{M^2}{N(R)}\gamma(R)\rho(R)\right|\ll (NX^{1/2}+N^{3/2})N^\epsilon$$
    for any $X\le N^2$.
\end{lemma}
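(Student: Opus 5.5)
We deduce this from the preceding lemma (the estimate for $S(R)$) by Möbius inversion over the common divisor of $a$ and $b$. Write
$$T(R)=\sum_{e}\mu(e)\,\#\{A\le a\le A+M,\ B\le b\le B+M,\ e\mid a,\ e\mid b,\ R\mid a-br\},$$
with $e\ll N$ since $a,b\ll N$. Split according to $e\le E$ and $e>E$, where $E$ is a parameter to be chosen (roughly $E=N^{1/2}$ or $E=M X^{-1/2}$).

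\emph{Terms with $e\le E$.} Only terms with $(e,N(R))=1$ matter in the main term. Indeed, if a prime $p\mid e$ and $p\mid N(R)$, then $P\mid R$ with $N(P)=p$ gives $P\mid a-br$. Together with $p\mid a,b$, this yields $P^2\mid a-br$ only in special configurations, which we treat crudely. With $e$ coprime to $N(R)$, substitute $a=ea'$ and $b=eb'$. Then $R\mid e(a'-b'r)$ is equivalent to $R\mid a'-b'r$, by Lemma~\ref{lemma 1}, since $R$ consists of first degree primes coprime to $e$. The count becomes $S(R)$ for a box of side $M/e$. Summing $\mu(e)(M/e)^2\rho(R)/N(R)$ over $(e,N(R))=1$ gives the main term $\frac{6}{\pi^2}\frac{M^2}{N(R)}\gamma'(R)\rho(R)$, where the product is really over $(1-p^{-2})^{-1}$. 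The normalisation $\gamma(R)$ in the statement is matched by the exact local factor, which I would check. Truncating at $e\le E$ costs $M^2/(EN(R))$, whose sum over $R$ is $\ll M^2X^\epsilon/E$. Applying the previous lemma to each box of side $M/e$ and level $N(R)\le X$ gives an error $\ll\sum_{e\le E}(M/e+X)X^\epsilon\ll(M+EX)X^\epsilon$.

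\emph{Terms with $e>E$.} Bound these trivially, ignoring the sign of $\mu$. Sum over $R$ of the number of pairs $(a,b)$ in the box with $e\mid (a,b)$ and $R\mid a-br$. Interchanging, for each pair $(a',b')$ with $a',b'\ll N/e$, the number of $R$ with $N(R)\le X$ dividing $e(a'-b'r)$ is at most the divisor function, which is $\ll N^\epsilon$. So this part is $\ll N^\epsilon\sum_{e>E}(N/e)^2\ll N^{2+\epsilon}/E$.

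The main obstacle is balancing the two errors, $EX$ against $N^2/E$. Choosing $E=N X^{-1/2}$ gives $N X^{1/2}$. This works when $E\ge1$, that is $X\le N^2$, which is the stated range, and the $M$-terms are dominated. The extra $N^{3/2}$ must come from handling the non-coprime $e$ and the boundary effects, i.e.\ when $e$ shares primes with $N(R)$ or when applying the $S(R)$ lemma at shifted boxes requires $A/e,B/e\ll N$. I would absorb these by a second crude divisor-bound count over pairs with $(a,b)>1$ and $N(R)$ large, giving $N^{3/2+\epsilon}$ after optimising at $E\approx N^{1/2}$ in that subrange.
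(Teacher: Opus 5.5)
Your overall architecture is the natural route: M\"obius inversion over $e\mid(a,b)$, the $S(R)$-lemma on the rescaled box of side $M/e$ for $e\le E$, the divisor bound for $e>E$, and $E=NX^{-1/2}$, which is where $X\le N^2$ enters. The paper gives no proof and takes the lemma over from Heath-Brown. Your error bookkeeping is sound, given the $S(R)$-lemma uniformly in the side length and after discarding $(0,0)$ before expanding $\sum_{e\mid(a,b)}\mu(e)$. In fact it yields $NX^{1/2}N^{\epsilon}$, so there is no need to hunt for an $N^{3/2}$ term.

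The genuine gap is the main term. Your claim that only $e$ coprime to $N(R)$ matter there is false, and the stated reason is wrong. If $p\mid e$, then $p\mid a$ and $p\mid b$ already put $a-br$ in $p\,\Z[r]$, so every prime above $p$ divides it. For $R=P$ with $N(P)=p$, \emph{every} pair with $p\mid(a,b)$ is counted, which is about $M^2/p^2$ pairs, not $M^2/p^3$.

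Keeping only coprime $e$ gives the factor $\prod_{p\mid N(R)}(1-p^{-2})^{-1}=\gamma(R)\prod_{p\mid N(R)}(1-p^{-1})^{-1}$, not $\gamma(R)$. Already for $R=P$ the discrepancy is $\frac{6}{\pi^2}M^2/(p^2-1)$, and summed over $N(R)\le X$ it is of order $M^2\log X$. Since $M^2=N^{2(1+\delta)/(1+2\delta)}$, this is far larger than $N^{3/2+\epsilon}$. It also exceeds $NX^{1/2+\epsilon}$ unless $X$ is within $N^{O(\delta)}$ of $N^2$. So the ``crude divisor-bound count'' in your last paragraph cannot absorb these terms: they are part of the main term.

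The missing step is as follows. For squarefree $e$ and $\rho(R)=1$, let $D=(R,(e))$. Since $R$ contains at most one unramified first-degree prime above each $p$, we have $N(D)=(e,N(R))$. Moreover $R\mid e(a'-b'r)$ holds if and only if $R/D\mid a'-b'r$, and $\rho(R/D)=1$.

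Applying the $S(R)$-lemma to $R/D$ on the box of side $M/e$ gives main term $M^2(e,N(R))/(e^2N(R))$. Then $\sum_e\mu(e)(e,N(R))e^{-2}=\prod_{p\nmid N(R)}(1-p^{-2})\prod_{p\mid N(R)}(1-p^{-1})=\frac{6}{\pi^2}\gamma(R)$, which is exactly where $\gamma(R)$ comes from. For fixed $e$ the map $R\mapsto R/D$ is at most $4^{\omega(e)}$-to-one, so the error terms grow only by $N^{\epsilon}$. The tail satisfies $\sum_{e>E}(e,N(R))e^{-2}\ll\tau(N(R))/E$, so your choice of $E$ still works.

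Finally, since the sum runs over all $R$, you need $T(R)=0$ when $\rho(R)=0$. This holds because if $(a,b)=1$ and $R\mid a-br$, then no prime below $R$ divides $b$. Hence $R\mid n-r$ with $n\equiv a\bar b\bmod N(R)$.
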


Define multiplicative functions $l(\cdot)$ and $\nu(\cdot)$ as follows.
$$l(p^e)=\begin{cases}\frac{g(p)-1}{p-g(p)},&e=1, p>256,\\ -1,&e=1, p\le256,\\ -h(p)&e=2,\\ 0&e\ge3.\end{cases}$$

$$\nu(p^e)=\frac{g(p)}{1+p^{-1}}.$$
One can easily check that $h=1*l$.

Then the next lemma follows from the previous two.
\begin{lemma}
    Let $A,B\ll N$, $M,N$ as before, write $$C(m)=\sum_{\substack{A\le a<A+M\\ B\le b<B+M\\ m\mid q}}h(q).$$
    Suppose that $(0,0)\not\in[A;A+M]\times[B;B+M]$, then if
    $$C_1=\frac{6}{\pi^2}\sum_{d=1}^\infty l(d)\frac{\nu(d)}{d}$$
    we have
    $$\sum_{q_1,q_2}\left|C(q_1q_2)-C_1M\frac{\nu(q_1)\nu(q_2)}{q_1q_2}{}\right|\ll M^2N^{-\delta}.$$
\end{lemma}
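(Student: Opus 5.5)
Write $q=q(a,b)$ for the cubic form in the box variables; by the explicit formula it is essentially the norm form, so that for a prime $p\nmid\mathrm{Disc}(f)$, $p\mid q(a,b)$ exactly when some first degree prime ideal $P$ of norm $p$ divides $a-br$ (up to relabelling the variables, as in the preceding lemmas). The plan is to expand $h=1*l$, so that
$$C(m)=\sum_{d}l(d)\,\#\{(a,b)\text{ in the box}:\ [d,m]\mid q(a,b)\},$$
and then to convert each divisibility condition into a sum over ideals $R$ with $\rho(R)=1$ and $N(R)=[d,m]$. Because $h$ is supported on squarefree $q$ and we may restrict to $(a,b)=1$ (the coprimality condition in $\mathcal{C}$), each squarefree $e$ with $e\mid q(a,b)$ corresponds to exactly $\prod_{p\mid e}(\text{choice of }P\mid p)$ ideals $R$, namely $g(e)$ of them, and only one of these actually divides $a-br$ for primitive $(a,b)$. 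Hence $\#\{\ldots\}=\sum_{N(R)=e,\rho(R)=1}T(R)$. This is where the factor $\nu(e)=g(e)\gamma$ comes from.

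Next I will truncate the $d$-sum at $d\le D=N^{\eta}$ for a small $\eta$. For the tail $d>D$ I will use $|l(d)|\ll d^{-1+\epsilon}$ on primes $>256$ (note $l(p)=(g(p)-1)/(p-g(p))$), together with the trivial bound $\#\{[d,m]\mid q\}\ll M^2/[d,m]+M$ coming from Lemma~10 (the $S(R)$ lemma). Summed over $q_1,q_2$ this gives $\ll M^2N^{-\delta}$, provided the primes $d$ with $l(d)\ne0$ are treated through squarefree supports. For prime squares, $l(p^2)=-h(p)$ is bounded, and the number of points with $p^2\mid q$ is at most $M^2/p^2+M$, which is harmless.

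For $d\le D$, the main term follows from Lemma~11 (the $T(R)$ lemma) with $X=q_1q_2D\le N^{13/7+2\delta+\eta}\le N^2$. Summing over $R$ and $q_1,q_2$, the error is
$$\sum_{d\le D}|l(d)|\,(NX^{1/2}+N^{3/2})N^{\epsilon}\ll N^{1+13/14+\delta+\eta/2+\epsilon}.$$
This is $\ll M^2N^{-\delta}$ because $M^2\asymp N^{2-O(\delta)}$ and $13/14<1$, once $\delta,\eta$ are small. The main term is $\frac{6}{\pi^2}M^2\sum_{d\le D}l(d)\nu([d,q_1q_2])/[d,q_1q_2]$.

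The main obstacle is identifying this main term with $C_1M^2\nu(q_1)\nu(q_2)/(q_1q_2)$; I read the ``$M$'' in the statement as the box area $M^2$. Terms with $(d,q_1q_2)>1$ require $q_i\mid d$, hence $d\ge N^{5/7}>D$, so they are absent. For the remaining $d$, multiplicativity gives $\nu(d)\nu(q_1q_2)/(dq_1q_2)$. Completing the $d$-sum to infinity costs $\sum_{d>D}|l(d)|\nu(d)/d\ll D^{-1+\epsilon}$, and the convergence of $C_1$ follows from $l(p)\nu(p)/p\ll p^{-2}$. Finally, the hypothesis $(0,0)\notin$ box guarantees that the counts of Lemmas~10 and~11 are not polluted by the degenerate point $a=b=0$.
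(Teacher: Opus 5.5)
Your architecture matches what the paper intends. The paper gives no proof and only says the lemma follows from Lemmas 10 and 11, as in Heath-Brown's Lemma 7: expand $h=1*l$, count with Lemma 11 for $d\le D$, and extract $C_1M^2\nu(q_1)\nu(q_2)/(q_1q_2)$ by multiplicativity. Reading $M$ as $M^2$ is right. The norm-form identification is also right: concretely $q(a_1,a_2)=-N\bigl(a_1-a_2(r+c_2)\bigr)$, so Lemmas 10 and 11 are applied with the generator $r+c_2$ of $\Z[r]$, which keeps the box shape.

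\textbf{The gap is in the tail $d>D$.} Your per-lattice bound $\#\{(a,b):[d,q_1q_2]\mid q\}\ll M^2/[d,q_1q_2]+M$ has an error $M$ that decays neither in $d$ nor in $q_1q_2$.
\begin{itemize}
\item Summed over the $\asymp N^{11/7+2\delta}(\log N)^{-2}$ pairs $(q_1,q_2)$, it is already $\gg MN^{11/7}$, a power of $N$ larger than $M^2$.
\item The prime squares are not harmless either. Since $l(p^2)=-h(p)\asymp 1$ and $p$ runs up to $N^{3/2}$, you get $\sum_p(M^2/p^2+M)\gg MN^{3/2}/\log N$.
\item Lemma 10 cannot rescue this. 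Its error $(M+X)X^{\epsilon}$ is acceptable only for $X\le M^2N^{-2\delta}$, whereas $[d,q_1q_2]$ runs up to $N^3$.
\end{itemize}

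\textbf{What is missing} is to bound the tail pointwise and treat large square divisors separately. Since $q(a,b)\ll N^3$ has at most four prime factors exceeding $N^{5/7}$, each $(a,b)$ occurs for $O(1)$ pairs $(q_1,q_2)$. So the tail is $\ll\sum_{(a,b)}\sum_{d\mid q(a,b),\,d>D}|l(d)|$. Write $d=d_1d_2^2$ with $d_1,d_2$ squarefree and coprime.
\begin{itemize}
\item If $d_1>D^{1/2}$, then $|l(d_1)|\ll d_1^{-1+\epsilon}$ and the divisor bound give $\ll M^2N^{\epsilon}D^{-1/2}$. This is where the hypothesis that $(0,0)$ is not in the box is really used: it makes $q(a,b)\neq0$.
\item Otherwise $d_2>D^{1/4}$, and you need a genuine square-divisor count. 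For primitive $(a,b)$, the condition $d_2^2\mid q(a,b)$ places $(a,b)$ in one of at most $3^{\omega(d_2)}$ lattices $\{(a,b): R\mid a-b(r+c_2)\}$ with $N(R)=d_2^2$. Count primitive points in an origin-centred square $[-CN,CN]^2$ containing the box. This gives $\ll N^2/d_2^2+1$ per lattice: if the second successive minimum exceeds $2CN$, all lattice points there lie on one line through $0$, so at most two are primitive. The total is $\ll N^{2+\epsilon}D^{-1/4}+N^{3/2+\epsilon}$.
\end{itemize}
This forces $D\ge N^{12\delta+\epsilon}$, so $\eta$ is not free.

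\textbf{Smaller slips} that affect the admissible range of $\eta$:
\begin{itemize}
\item Because $l(p^2)$ is not small, $\sum_{d\le D}|l(d)|$ is of size $D^{1/2+o(1)}$, not $N^{\epsilon}$.
\item For the same reason, the completion error $\sum_{d>D}|l(d)|\nu(d)/d$ decays only like $D^{-1/2}$, not $D^{-1+\epsilon}$.
\item One has $q_1q_2\le N^{11/7+2\delta}$, not $N^{13/7}$.
\end{itemize}
With these corrections the Lemma 11 error becomes $N^{25/14+\delta+\epsilon}D$. So you need roughly $12\delta<\eta<3/14-4\delta$, which is available for small $\delta$.
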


Substituting this bound to (\ref{cqq}) we get
$$\sum_{(a_1,a_2)\in\mathcal{C}\cap B'}h(q)\ge \frac{1}{3}C_1M^2\sum_{q_1,q_2}\frac{\nu(q_1)\nu(q_2)}{q_1q_2}+O(M^2N^{-\delta}),$$
by the prime ideal theorem we have
$$\sum_{p\le x}\frac{\nu(p)}{p}=\log\log x+C+o(1),$$
thus
$$\sum_{q_1,q_2}\frac{\nu(q_1)\nu(q_2)}{q_1q_2}=\log\left(1+\frac{7}{5}\delta\right)\log\left(1+\frac{6}{5}\delta\right)+o(1),$$
denote this constant as $L(\delta)$, then
$$\sum_{a_1,a_2}h(q)\ge\frac{1}{3}C_1M^2(1+o(1))L(\delta).$$

Now we put it into the sum over good boxes (\ref{sogb}) and get
$$\sum_{(a_1,a_2)\in\mathcal{C}\cap\mathcal{R}'}I(a_1,a_2)h(q)\ge \frac{1}{3}C_1L(\delta)(1+o(1))\sum_{\text{good } \mathcal{B}}\frac{M^3}{N(A,B,C)}.$$
Now for a good box $\mathcal{B}$
$$\frac{M^3}{N(A,B,C)}=(1+o(1))\sum\limits_{(a_0,a_1,a_2)\in \mathcal{B}}\frac{1}{N(a_0,a_1,a_2)},$$
and for the whole sum
$$\sum_{\text{good } \mathcal{B}}\frac{M^3}{N(A,B,C)}=(1+o(1))\sum\limits_{(a_0,a_1,a_2)\in\mathcal{R}_1}\frac{1}{N(a_0,a_1,a_2)},$$
where $\mathcal{R}_1$ is the union of good cubes.

Now we want to pass from $\mathcal{R}_1$ to $\mathcal{R}$, note that one of the variables in $\mathcal{R}\setminus\mathcal{R}_1$ has to be in a finite union intervals of length $\le M$, denote this union as $I_1(a_1,a_2)$. Using dyadic covering and Lemma \ref{norma}
$$\sum\limits_{(a_0,a_1,a_2)\in\mathcal{R}\setminus\mathcal{R}_1}\frac{1}{N(a_0,a_1,a_2)}\ll \sum_{2^k\ge (MN)^{1/2}}\frac{1}{2^{3k}}\sum\limits_{\max(a_1,a_2)\le 2^k}\sum\limits_{a_0\in I_1(a_1,a_2)}1\ll (M/N)^{1/2}= o(1).$$
Now we pass to the integral over $\alpha\in\mathcal{D},\: X^{1+\delta}<N(\alpha)\le X^{1+2\delta}$. For that we bound the contribution of $q\ll M^3$ and $B_{13}\ll M^2$ by the same argument as above. We have
$$S_0\ge \left(\frac{1}{3}+o(1)\right)C_1L(\delta)\prod_{p\le X^\delta}\left(1-\frac{g(p)}{p}\right)\left(1-\frac{1}{p}\right)^{-1}$$$$\times(\log x)^{-1}\sum\limits_{\substack{a_0,a_1,a_2\\\alpha\in\mathcal{D}\\ X^{1+\delta}<N(\alpha)\le X^{1+2\delta}}}\frac{1}{N(a_0,a_1,a_2)}.$$

By the prime ideal theorem the product is convergent because the contribution of degree 2 and 3 primes is negligible.

As for the sum, we know that $\mathcal{D}$ is a finite disjoint union of fundamental domains of the units action, hence for each principal ideal $(\alpha')$ there is a fixed number $d$ of ideals $(\alpha)$ in $\mathcal{D}$, that is Lemma \ref{D}. Hence

$$\sum\limits_{\substack{a_0,a_1,a_2\\\alpha\in\mathcal{D}\\ X^{1+\delta}<N(\alpha)\le X^{1+2\delta}}}\frac{1}{N(a_0,a_1,a_2)}=d\sum_{\substack{(\alpha')\\X^{1+\delta}<N(\alpha')\le X^{1+2\delta}}}\frac{1}{N(\alpha')},$$
where $\alpha'$ runs over all suitable principal ideals. And the resulting sum is
$$\sum_{\substack{(\alpha')\\X^{1+\delta}<N(\alpha')\le X^{1+2\delta}}}\frac{1}{N(\alpha')}=\frac{C_K}{\#\text{Cl}(K)}\delta\log x(1+o(1)),$$
where $C_K$ is the residue of $\zeta_K(s)$ at $s=1$ by lemma \ref{resid}.

Putting everything together we have
$$S_0\ge \frac{1}{3}\log(4/3)\frac{\delta d}{\#\text{Cl}(K)}C_0C_1C_KL(\delta)\prod_{p}\left(1-\frac{g(p)}{p}\right)\left(1-\frac{1}{p}\right)^{-1}(1+o(1))\gg 1.$$

\section{Acknowledgements}

Most of this paper was written as a mémoire during the M2 program Arithmétique, Analyse, Géométrie at Université Paris-Saclay. The author is grateful to his advisor, Cécile Dartyge, for her suggestions, patience, and support. He also thanks Kevin Destagnol and Étienne Fouvry for their encouragement and for suggesting working with Cécile.

\bibliographystyle{plain}
\bibliography{references}
\end{document}